\newtheorem*{rep@theorem}{\rep@title}
\newcommand{\newreptheorem}[2]{%
\newenvironment{rep#1}[1]{%
 \def\rep@title{#2 \ref{##1}}%
 \begin{rep@theorem}}%
 {\end{rep@theorem}}}
\newtheorem{intro_thm}{Theorem}
\newtheorem{lemma}{Lemma}[section]
\newtheorem{thm}[lemma]{Theorem} 
\newtheorem{prop}[lemma]{Proposition}
\newtheorem{cor}[lemma]{Corollary}
\theoremstyle{definition}
\newtheorem{defn}[lemma]{Definition}
\newtheorem{es}[lemma]{Example}
\theoremstyle{remark}
\newtheorem{oss}[lemma]{Remark}
\newtheoremstyle{TheoremNum}
        {\topsep}{\topsep}              
        {\itshape}                      
        {}                              
        {}                     
        {.}                             
        { }                             
        {\thmname{\bfseries #1}\thmnote{ \bfseries #3}}
    \theoremstyle{TheoremNum}
\newtheorem{rec_thm}{Theorem}
\newcommand\matR{{\mathbb{R}}}
\newcommand\matZ{{\mathbb{Z}}}
\newcommand\matC{{\mathbb{C}}}
\newcommand\matF{{\mathbb{F}}}
\newcommand\calS{{\mathcal S}}
\newcommand\calT{{\mathcal T}}
\newcommand\calB{{\mathcal B}}
\newcommand\calY{{\mathcal Y}}
\newcommand\calR{\mathcal{R}}
\newcommand\calX{\mathcal{X}}
\newcommand\calD{{\mathcal D}}
\newcommand\calO{{\mathcal O}}
\newcommand{\Hm}{\textup{\textup{H}}}
\newcommand{\Hb}{\textup{\textup{H}}_{\textup{b}}}
\newcommand{\Hc}{\textup{\textup{H}}_{\textup{c}}}
\newcommand{\Hcb}{\textup{\textup{H}}_{\textup{cb}}}
\newcommand{\Hcbb}{\textup{\textup{H}}_{\textup{c(b)}}}
\newcommand{\Linf}{\text{L}^{\infty}}
\newcommand{\rk}{\text{rk}}
\newcommand{\Isom}{\text{Isom}}
\newcommand{\Stab}{\text{Stab}}
\newcommand{\argu}{\text{arg}}
\newcommand\gencoc{{\sigma: \Gamma\times X\rightarrow G}}
\begin{document}

\title[Parametrized K\"{a}hler class]{Parametrized K\"{a}hler class and Zariski dense orbital 1-cohomology}

\author[F. Sarti]{F. Sarti}
\address{Department of Mathematics Giuseppe Peano, Universit\`{a} di Torino, Via Carlo Alberto 10, 10123 Torino, Italy}
\email{filippo.sarti@unito.it}

\author[A. Savini]{A. Savini}
\address{Section de Math\'ematiques, Universit\'{e} de Gen\`eva, Rue Du Conseil G\'eneral 7-9, Geneva 1205, Switzerland}
\email{alessio.savini@unige.ch}

\date{\today.\ \copyright{\ F. Sarti, A. Savini 2022}.
  The first author is supported through GNSAGA, funded by the INdAM, the second author is supported by the SNF grant n. 20020-192216.}

\begin{abstract}
Let $\Gamma$ be a finitely generated group and let $(X,\mu_X)$ be an ergodic standard Borel probability $\Gamma$-space. Suppose that $\calX$ is a Hermitian symmetric space not of tube type and assume that $G=\Isom(\calX)^{\circ}$ is simple. 
Given a Zariski dense measurable cocycle $\sigma:\Gamma \times X \rightarrow G$, we define the notion of parametrized K\"{a}hler class and we show that it completely determines the cocycle up to cohomology.
\end{abstract}
  
\maketitle


\section{Introduction}
Thanks to the pioneering work by both Ghys \cite{ghys:articolo} and Matsumoto \cite{matsumoto:articolo}, it is well-known that a circle action is completely determined by the pullback of the \emph{bounded Euler class} $e^b_{\mathbb{Z}} \in \Hb^2(\textup{Homeo}^+(\mathbb{S}^1);\mathbb{Z})$ along the representation which defines the action. It is natural to ask whether this rigidity phenomenon may appear in other different contexts. A suitable setting to answer to such a question is the one of Hermitian symmetric spaces. 

Given a Hermitian symmetric space $\calX$ with isometry group $G=\textup{Isom}(\calX)^\circ$, the existence of a $G$-invariant complex structure on $\calX$ allows to construct a natural K\"{a}hler form $\omega_{\calX}$.  Exploiting Dupont's isomorphism \cite{dupont76} between $G$-invariant differential forms on $\calX$ and the continuous cohomology of $G$, one argues that $\omega_{\calX}$ determines a non-vanishing class in degree two, the \emph{K\"{a}hler class} $k_G$. Remarkably such a class has a bounded analogue \cite{CO}, namely the \emph{bounded K\"{a}hler class} $k_G^b$. 

The study of the bounded K\"{a}hler class has interested many mathematicians so far. Just to mention few of them, we recall the work by Burger-Iozzi \cite{BIgeo}, by Burger, Iozzi and Wienhard \cite{BIW1} and by Pozzetti \cite{Pozzetti}, where the common denominator is the notion of maximality for representations, which is given by using the pullback of the bounded K\"{a}hler class. 
In particular, Pozzetti proved a rigidity result for representations of complex hyperbolic lattices into the group $\textup{SU}(m,n)$, while Burger, Iozzi and Wienhard gave a complete characterization of maximal representations from surface groups into a general Hermitian Lie group.

Going back to the similarities shared with the bounded Euler class, Burger, Iozzi and Wienhard \cite{BI04,BIW07} proved that the pullback of the bounded K\"{a}hler class determines uniquely the conjugacy class of any Zariski dense representation of a finitely generated group into a Hermitian group not of \emph{tube type}. A Hermitian group is of tube type if the associated symmetric space can be biholomorphically realized as $V+i \Omega$, where $V$ is a real vector space and $\Omega \subset V$ is a proper convex cone. 

The second author \cite{savini2021} has recently extended the result by Ghys to the context of measurable cocycles. The aim of this paper is to get a similar extension for the result by Burger, Iozzi and Wienhard \cite{BIW07}. Given a finitely generated group $\Gamma$ and a standard Borel probability $\Gamma$-space $(X,\mu_X)$, we are going to focus our attention on Zariski dense cocycles $\Gamma\times X\rightarrow G$ (Definition \ref{cocycle_definition}), where $G$ is a Hermitian group not of tube type. Here Zariski dense refers to the algebraic hull (Definition \ref{def:algebraic:hull}). 

Before stating the main results, we want to discuss shortly the techniques that we are going to use (most of them were introduced and discussed in \cite{BIW07}). Given a Hermitian symmetric space $\calX$ with $G=\text{Isom}(\calX)^\circ$, we can biholomorphically realize $\calX$ as a bounded domain $\calD_{\calX}$ in some complex vector space. In this case, even if the topological boundary $\partial \calD_{\calX}$ is not a $G$-homogeneous space, it contains a unique closed $G$-orbit $\calS_G$ called \emph{Shilov boundary} (Definition \ref{def_shilov_boundary}). The importance of such a boundary relies on the fact that one can define on it a preferred representative for bounded cohomology classes. In fact, setting
$$
\beta_{\textup{Berg}}:\calD^3_{\calX} \rightarrow \mathbb{R} \ , \ \ \beta_{\textup{Berg}}(x,y,z):=\int_{\Delta(x,y,z)} \omega_{\calX} \ ,
$$
where $\Delta(x,y,z)$ is any smooth triangle with geodesic sides, one can measurably extend $\beta_{\textup{Berg}}$ to a $G$-invariant Borel cocycle $\beta_G$ defined everywhere on the Shilov boundary $\calS_G$ \cite{BIW07}. The cocycle $\beta_G$ is a natural representative for the bounded K\"{a}hler class (Example \ref{es_implement_kahler_class}).

Moreover, the map $\beta_G$ satisfies the following identity \cite[Section 3.2]{BIW07}
$$
\langle\langle x,y,z\rangle\rangle = e^{i\beta_G(x,y,z)} \mod \ \mathbb{R}^\ast \ ,
$$
where the equality $\mod \mathbb{R}^\ast$ means that the two terms are equal up to a non-zero multiplicative real constant. The left-hand side of the above equation is the \emph{Hermitian triple product} of $x,y,z$ and it is defined in terms of the Bergman kernels on $\calD_{\calX}$ (Section \ref{sec:hermitian:spaces}).

The boundary $\calS_G$ can be identified with the quotient $(\mathbf{G}/\mathbf{Q})(\matR)$, where $\mathbf{G}$ is the connected adjoint $\matR$-group associated to the complexification of the Lie algebra of $G$ and $\mathbf{Q}<\mathbf{G}$ is a specific maximal parabolic subgroup \cite[Section 2.3]{BIW07}. Burger, Iozzi and Wienhard exploited such identification to extend the Hermitian triple product to a \emph{complex Hermitian triple product} $\langle\langle\cdot,\cdot,\cdot\rangle\rangle_{\matC}$ on $(\mathbf{G}/\mathbf{Q})^3$ \cite[Section 2.4]{BIW07}. They proved that the space $\calX$ is not of tube type if and only if the complex Hermitian triple product is not a constant function \cite[Theorem 1]{BIW07}. 

Another reason which justifies our interest in the existence of a measurable representative $\beta_G$ of $k_G^b$ relies on the role played by \emph{boundary maps} in the \emph{pull back} of cohomology classes along cocycles \cite{savini3:articolo,moraschini:savini,moraschini:savini:2}. Given a cocycle $\sigma:\Gamma\times X\rightarrow G$ as above, a boundary map $\phi:B\times X\rightarrow \calS_G$ is a measurable $\sigma$-equivariant map (Definition \ref{def_boundary_map}), where $B$ is a $\Gamma$-boundary (Definition \ref{def_boundary}). Boundary maps exist for instance for Zariski dense cocycles \cite[Theorem 1]{sarti:savini1} and allow to implement an alternative definition of the pull back along $\sigma$. 

In the setting described so far, we will prove the following
\begin{intro_thm}\label{main_theorem2}
Let $\Gamma$ be a finitely generated group, let $(X,\mu_X)$ be an ergodic standard Borel probability $\Gamma$-space and consider a Zariski dense measurable cocycle $\sigma:\Gamma\times X\rightarrow G$  into a simple Hermitian Lie group not of tube type.
Then the class $\Hb^2(\sigma)(k_G^b)$ in $\Hb^2(\Gamma;\textup{L}^\infty(X;\matR))$ is non-zero and it determines uniquely the cohomology class of $\sigma$.
\end{intro_thm}

If we denote by $\Hm^1_{ZD} (\Gamma \curvearrowright X;G)$ the space of equivalence classes of Zariski dense cocycles, Theorem \ref{main_theorem2} implies that we get an injection
\begin{equation}\label{equation_inclusion}
K_X: \Hm^1_{ZD} (\Gamma \curvearrowright X;G)\rightarrow \Hb^2(\Gamma;\Linf(X;\matR))\;,\;\;\; [\sigma]\mapsto \Hb^2(\sigma)(k_G^b).
\end{equation}

As \cite[Theorem 3]{BIW07} follows from the more general \cite[Theorem 4]{BIW07}, the same thing will happen in our case. Given two measurable cocycles $\sigma_1,\sigma_2:\Gamma \times X\rightarrow G_i$ where 
$G_i=\Isom (\calX_i)^{\circ}$ for $i=1,2$, we say that $\sigma_1$ and $\sigma_2$ are \emph{equivalent} if there exists an isomorphism $s:G_1 \rightarrow G_2$ such that $s \circ \sigma_1$ and $\sigma_2$ are cohomologous (Definition \ref{def_equivalent_cocycles}). Theorem \ref{main_theorem2} is a consequence of the following

 \begin{intro_thm}\label{main_theorem}
Let $\Gamma$ be a finitely generated discrete group and let $(X,\mu_X)$ be an ergodic standard Borel probability $\Gamma$-space. Let $\{\sigma_i:\Gamma\times X\rightarrow G_i\},i=1,\ldots,n$ be a family of Zariski dense measurable cocycles into simple Hermitian Lie groups not of tube type. If the cocycles are pairwise inequivalent, then the subset 
  $$\{\Hb^2(\sigma_i)(k^b_{G_i}), 1\leq i\leq n\}\subset \Hb^2(\Gamma;\textup{L}^\infty(X;\matR))$$
  is linearly independent over $\textup{L}^\infty(X;\matZ)$.
 \end{intro_thm}

The structure of the proof shares some similarities with the one of \cite[Theorem 4]{BIW07}. A key point of both their proofs and ours is the characterization of Hermitian spaces not of tube type in terms of the Hermitian triple product. However, in the measurable setting one has to overcome some new difficulties. Boundary maps for cocycles, whose existence is ensured by \cite[Theorem 1]{sarti:savini1}, must be studied carefully. In fact, for any $i=1,\ldots,n$ and $\sigma_i$-equivariant map $\phi_i:B\times X\rightarrow \calS_{G_i}$, we prove that the slices, namely the maps obtained by fixing $x\in X$, preserves transversality (see Section \ref{sec:hermitian:spaces} and Proposition \ref{prop_transverse}).
This means that for almost every $x\in X$ and almost every pairs of points $(b_1,b_2)\in B^2$, the points $\phi_i(b_1,x)$ and $\phi_i(b_2,x)$ are $G_i$-conjugated in $\calS_{G_i}$. 
Such property, combined with a classical Fubini argument, allows to twist the cocycles $\sigma_i$'s and their boundary maps $\phi_i$'s so that the image under $\phi_i$ of almost every pair of points $(b_1,b_2)\in B^2$ coincides with a fixed pair $\phi_i(b_1,x_0),\phi_i(b_2,x_0)$ for some $x_0\in X$.
Then, we show that a linear dependence between the pullback classes would imply that the essential image of almost every slice is contained in a proper Zariski closed subset of the Shilov boundary.
This leads to a contradiction with the Zariski density of the $\sigma_i$'s. 

As we will recall in Section \ref{sec_measurable_cocycles}, the set of measurable cocycles inherits a natural cohomological interpretation coming from the theory by Feldmann and Moore  about the cohomology of equivalence relations \cite{feldman:moore}. 
The injective map of Equation \eqref{equation_inclusion} shows how Theorem \ref{main_theorem2} realizes an inclusion of the subset of Zariski dense cocycles in the second bounded cohomology of $\Gamma$ with coefficients in $\Linf(X;\matR)$.
This allows to give some triviality statements about the space $\Hm^1_{ZD} (\Gamma \curvearrowright X;G)$ in some specific cases.
For instance if $\Gamma$ is a higher rank lattice whose bounded cohomology vanishes in degree two, we show that $\Hm^1_{ZD} (\Gamma \curvearrowright X;G)$ is trivial (see Proposition \ref{cor_high_rank}).This can be interpreted as an alternative approach to Zimmer' Superrigidity, since we have that under hypothesis of Zariski density, cocycles coincides with representations and there are no representations in this context.

Finally, we focus on the case of products, namely when $\Gamma$ is a lattice in a product $H=\prod\limits_{i=1}^n H_i$ whose projection on each factor is dense.  In Proposition \ref{proposition_product}, under certain suitable hypothesis on the $H$-action on the Borel probability space $X$, we show that the vanishing of the second bounded cohomology of each factor $H_i$ implies that cocycles $\Gamma\times X\rightarrow G$ into a Hermitian group not of tube type cannot exist.

\vspace{0.5 cm}

\paragraph{\textbf{Plan of the paper.}}
The paper is divided into three main sections. In Section \ref{section_preliminaries} our aim is to set the necessary theory and tools.  Section \ref{sec:hermitian:spaces} is devoted to recall the basics about Hermitian symmetric spaces, the notions of Shilov boundary, Bergman cocycle and Hermitian triple product. In Section \ref{sec_measurable_cocycles} we introduce measurable cocycles and we show how such objects inherits a cohomological interpretation. We then move to Section \ref{section_boundary_maps}, where we deal with the notions of boundary and boundary map. We conclude this introductory part giving a short overview about the theory of continuous bounded cohomology. We describe the pullback of cohomology classes along cocycles, which allows to define the main character of the paper, the parametrized K\"{a}hler class (Section \ref{section_bounded_cohomology}). 

In Section \ref{section_proof} we give the proofs of our main theorems and we conclude, in Section \ref{section_consequences}, with some applications following from our results. 


\vspace{ 0.5 cm}

\section{Preliminaries}\label{section_preliminaries}

\subsection{Hermitian symmetric spaces and K\"{a}hler class}\label{sec:hermitian:spaces}
In this section we deal with Hermitian symmetric spaces. We will distinguish them into spaces of tube type and not of tube type. Then we define the Shilov boundary of the bounded domain realization in $\matC^n$. The K\"{a}hler structure of a Hermitian symmetric space allows us to introduce the Bergman cocycle. We conclude recalling the Hermitian triple product and the complex Hermitian triple product. For the details about this section we refer either to \cite{BI04,BIW07} or to the book chapter \cite{Kor00}.

\begin{defn}
A symmetric space $\mathcal{X}$ with isometries $G=\Isom(\mathcal{X})^{\circ}$ is \emph{Hermitian} if it admits a $G$-invariant complex structure.  Given a semisimple algebraic group $\mathbf{G}$ defined over $\mathbb{R}$, we say that $G=\mathbf{G}(\matR)^{\circ}$ is of \emph{Hermitian type} if its associated symmetric space $\mathcal{X}$ is Hermitian. The notation $\mathbf{G}(\mathbb{R})^\circ$ refers to the connected component of the neutral element of the real points of $\mathbf{G}$.
\end{defn}

Hermitian symmetric spaces can be distinguished into spaces of \emph{tube type} and \emph{not of tube type}. The former ones are biholomorphic to a set of the form $V+i\Omega$, where $V$ is a real vector space and $\Omega\subset V$ is a proper convex cone. For instance, the group $\textup{SU}(p,q)$ of complex matrices preserving the Hermitian form $h_{p,q}$ of signature $(p,q)$ on $\mathbb{C}^{p+q}$ is of tube type only when $p=q$. In that case, the Hermitian symmetric space is biholomorphic to $\textup{Herm}(p,\mathbb{C}) + i \textup{Herm}^+(p,\mathbb{C})$, where $\textup{Herm}(p,\mathbb{C})$ is the space of complex Hermitian matrices and $\textup{Herm}^+(p,\mathbb{C})$ is the proper cone of positive definite ones (for $p=1$ we get back the upper half-space realization of the hyperbolic plane $\mathbb{H}^2_{\mathbb{R}}$). 

For both spaces of tube type and not of tube type, it is a standard fact that there exists a \emph{bounded domain realization} (the Harish-Chandra realization, \cite[Theorem.III.2.6]{Kor00}), that is a biholomorphism between $\mathcal{X}$ and a bounded connected open subset $\mathcal{D}_{\calX}\subset \matC^n$. The group $G$ acts on $\mathcal{D}_{\calX}$ via biholomorphisms and such an action can be continuously extended to the topological boundary $\partial \mathcal{D}_{\calX}$. 

\begin{defn}\label{def_shilov_boundary}
The \emph{Shilov boundary} of a bounded domain $\mathcal{D}\subset\matC^n$ is the unique minimal closed subset $\calS$ of $\partial\mathcal{D}$ such that, for any continuous function $f$ on $\overline{\mathcal{D}}$ which is holomorphic on $\mathcal{D}$, it holds
$$|f(z)|\leq \max\limits_{y\in\calS}|f(y)|$$
for every $z\in\mathcal{D}$.
\end{defn}

In the Harish-Chandra realization the Shilov boundary $\calS_G$ can be identified with the $G$-orbit of a unique point \cite[Section 2.3]{BIW07}, so $\calS_G$ is a homogeneous $G$-space. When $G=\textup{SU}(p,q)$, the associated Shilov boundary $\calS_{p,q}$ parametrizes totally isotropic subspaces of $\mathbb{C}^{p+q}$ with respect to the form $h_{p,q}$ having maximal dimension $d= \min \{ p, q \}$. It is well-known that $\calS_{p,q}$ is a homogeneous $G$-space and it can be identified with the quotient $G/Q$, where $Q$ is the stabilizer of a fixed isotropic subspace of dimension $d$. The latter identification can be actually extended to any Hermitian symmetric space. More precisely, we consider a connected real algebraic group $\mathbf{G}$ corresponding to the complexification of a Lie group of Hermitian type $G=\mathbf{G}(\mathbb{R})^\circ$. By \cite[Section 2.3.1]{BIW07} there exists a \emph{maximal proper parabolic subgroup} $\mathbf{Q}<\mathbf{G}$, such that $\calS_G$ is isomorphic to $(\mathbf{G}/\mathbf{Q})(\mathbb{R})=G/Q$, where $Q=G \cap \mathbf{Q}$.
In this way $\calS_G$ is realized as the real points of the complex projective variety $\mathbf{G}/\mathbf{Q}$.

The product $\calS_G \times \calS_G$ contains a unique open $G$-orbit $\calS_G^{(2)}$ whose elements are pairs of \emph{transverse} points (in particular, $G$ acts transitively on transverse pairs).  In the example of $G=\textup{SU}(p,q)$, two maximal isotropic subspaces $V,W \subset \mathbb{C}^{p+q}$ are transverse if their intersection is trivial, that is $V \cap W= \{ 0 \}$. Given a point $\xi \in \calS_G$, the subset of $\calS_G$ of points $\eta$ such that $\eta$ and $\xi$ are not transverse has proper Zariski closure in the quotient $\mathbf{G}/\mathbf{Q}$. We will exploit this fact in the proof of Proposition \ref{prop_transverse}.

Let $g_{\calX}$ be the Riemannian tensor on $\calX$ and let $\mathcal{J}_{\calX}$ be the $g_{\calX}$-invariant complex structure. The \emph{K\"{a}hler form} on $\calX$ is the differential 2-form $\omega_{\calD_{\calX}} \in \Omega^2(\calX)^G$ defined by
$$
(\omega_{\calD_{\calX}})_a(X,Y):=(g_{\calX})_a(X,(\mathcal{J}_{\calX})_a(Y)) \ ,
$$
where $a \in \calX$ and $X,Y \in T_a\calX$. Notice that being $G$-invariant, $\omega_{\calD_{\calX}}$ is closed by Cartan's Lemma \cite[VII.4]{Hel01}. For any triple of points $x,y,z \in \calD_{\calX}$, we can define the function 
$$
\beta_{\textup{Berg}}(x,y,z):=\int\limits_{\Delta(x,y,z)} \omega_{\calD_{\calX}}
$$
where $\Delta(x,y,z)$ denotes a smooth oriented triangle with geodesic edges and vertices $x,y,z$. 

We want to relate the function $\beta_{\textup{Berg}}$ with the notion of Bergman kernels. We recall that the space of complex-valued square integrable holomorphic functions $\mathcal{H}^2(\calD_{\calX})$ on $\calD_{\calX}$ is an Hilbert space with Hermitian product
$$
(f|g):=\int_{\calD_{\calX}} f(z)\overline{g(z)}d\mathcal{L}(z) \ ,
$$
where $\mathcal{L}$ is the Lebesgue measure on $\calD_{\calX}$. Since the evaluation of any element $f \in \mathcal{H}^2(\calD_{\calX})$ on a point $w \in \calD_{\calX}$ is a bounded linear functional on $\mathcal{H}^2(\calD_{\calX})$ (see  \cite{Kor00}), by the Riesz representation theorem it can be written as the Hermitian product $f(w)=(f|K_w)$, for some $K_w \in \mathcal{H}^2(\calD_{\calX})$. We can define the \emph{Bergman kernel} $k_{\calD_{\calX}}:\calD_{\calX} \times \calD_{\calX} \rightarrow \mathbb{C}^\ast$ as the Hermitian product $k_{\calD_{\calX}}(z,w)=(K_z|K_w)$. The following equation holds \cite[Theorem 3.7]{BIW07} 
\begin{equation}\label{eq_bergmann_relation}
\beta_{\textup{Berg}}(x,y,z):= -(\argu k_{\calD_{\calX}}(x,y)+\argu k_{\calD_{\calX}}(y,z)+\argu k_{\calD_{\calX}}(z,x)) \ ,
\end{equation}
where $\argu$ is the branch of the argument function with values between $-\pi$ and $\pi$. 

Since $k_{\calD_{\calX}}$ can be extended to pairs of transverse points in $\calS_G^{(2)}$ \cite{satake:1980}, by Equation \eqref{eq_bergmann_relation} the function $\beta_{\textup{Berg}}$ can be extended to the subset 
$$\calS_G^{(3)}\coloneqq \left\{(\eta_0,\eta_1,\eta_2 \in \calS_G^3 \,|\, (\eta_i,\eta_j)\in \calS_G^{(2)} \text{ if } i\neq j ) \right\}\subset \overline{\calD_{\calX}}^{3}\,.$$
If we still denote by $\beta_{\textup{Berg}}$ such extension, by \cite[Corollary 3.8]{BIW07} $\beta_{\textup{Berg}}$ is a continuous $G$-invariant alternating function and its Alexander-Spanier coboundary vanishes, that is $d\beta_{\textup{Berg}}(x_0,x_1,x_2,x_3)=0$ for every $4$-tuple $(x_0,x_1,x_2,x_3)$ with $(x_i,x_j,x_k)\in \calD_{\calX}^{3}\sqcup \calS_G^{(3)}$, whenever $i< j < k$. 
Furthermore, it satisfies 
$$\sup\limits_{(\eta_0,\eta_1,\eta_2)\in \calS_G^{(3)}} |\beta_{\textup{Berg}}(\eta_0,\eta_1,\eta_2)| = \pi\rk \calX , $$
where $\rk \calX$ denotes the \emph{real rank} of $\calX$, that is the dimension of a maximal flat embedded isometrically in $\calX$. For instance, when $G=\textup{SU}(p,q)$ the real rank of the associated symmetric space is given by $\min \{ p, q \}$. 
By \cite[Theorem 4.2]{BIW07} the restriction $\beta_{\textup{Berg}}:\calS_G^{(3)}\rightarrow \matR$ extends to a $G$-invariant Borel cocycle on the whole $\calS_G^3$. We call such extension the \emph{Bergman cocycle} and we denote it by $\beta_G$.

We conclude the section by introducing the definition of the Hermitian triple product and by relating it to the function $\beta_{\textup{Berg}}$. Consider the function
$$\langle\cdot,\cdot,\cdot\rangle:\calS_{G}^{(3)}\rightarrow\matC^*\; ,$$
$$\langle z_1,z_2,z_3\rangle\coloneqq k_{\calD_{\calX}}(\eta_0,\eta_1)k_{\calD_{\calX}}(\eta_1,\eta_2)k_{\calD_{\calX}}(\eta_2,\eta_0) \ ,
$$
where we have tacitly exploited the extension of the Bergman kernel to the boundary.
By \cite[Proposition 2.12]{BIW07} this map is continuous and Equation \eqref{eq_bergmann_relation} implies that
\begin{equation}\label{eq_relation}
\langle z_1,z_2,z_3\rangle\equiv e^{i\beta_{\textup{Berg}}(\eta_0,\eta_1,\eta_2)}\;\;\; \textup{mod}\;\; \matR^*
\end{equation}
for any $(\eta_0,\eta_1,\eta_2)\in \calS_G^{(3)}$. The notation $\textup{mod} \  \mathbb{R}^\ast$ refers to the fact that the two terms in Equation \eqref{eq_relation} differ by the multiplication of a non-zero real number. If we quotient $\matC^*$ by the $\matR^*$-action by dilations, we can compose the map $\langle \cdot , \cdot , \cdot \rangle$ with the quotient projection to obtain a map 
$$\langle\langle\cdot,\cdot,\cdot\rangle\rangle:\calS_G^{(3)}\rightarrow\matR^*\backslash\matC^* \  .$$
The map above is the \emph{Hermitian triple product}. 

In virtue of the identification between $\calS_G$ and the quotient $(\textbf{G}/\textbf{Q})(\matR)$, Burger, Iozzi and Wienhard extended the Hermitian triple product to a \emph{complex Hermitian triple product} defined on the whole $\mathbf{G}/\mathbf{Q}$. We start denoting by $A^*$ the group $\matC^*\times\matC^*$ endowed with the antilinear involution $(\lambda,\mu)\mapsto (\overline{\mu},\overline{\lambda})$ (that is a \emph{real structure} on $A^\ast$) and by $\Delta^*$ the image of the diagonal embedding of $\matC^*$.

By \cite[Corollary 2.17]{BIW07} there exists a rational map 
$$\langle\langle\cdot,\cdot,\cdot\rangle\rangle_{\matC}:(\textbf{G}/\textbf{Q})^{3}\rightarrow\Delta^*\backslash A^*$$ 
that makes the following diagram commutative
$$
\xymatrix{
\calS_G^{(3)} \ar[rr]^{\langle\langle\cdot,\cdot,\cdot\rangle\rangle}\ar[d]^{(i)^3} & & \matR^*\backslash\matC^* \ar[d]^{\Delta} \\
(\textbf{G}/\textbf{Q})^{3} \ar[rr]^{\langle\langle\cdot,\cdot,\cdot\rangle\rangle_{\matC}}& &\Delta^*\backslash A^*.
}
$$
Here $i:\calS_G\rightarrow \textbf{G}/\textbf{Q}$ refers to the $G$-equivariant identification between $\calS_G$ and $(\textbf{G}/\textbf{Q})(\matR)$, and $\Delta$ stands for the map induced by the diagonal embedding. It is worth mentioning that the complex Hermitian triple product is a rational function on $(\mathbf{G}/\mathbf{Q})^3$ since it can be written as a product of determinants of complex automorphy kernels (see \cite[Section 2.4]{BIW07} for more details).

The fact that a Hermitian space is not of tube type has consequences on the complex Hermitian triple product, and this fact is crucial in the proof of Theorem \ref{main_theorem}. For any pair of transverse points $(\eta_0,\eta_1)\in \calS_G^{(2)}$ we denote by $\calO_{\eta_0,\eta_1}\subset (\textbf{G}/\textbf{Q})(\matR)$ the Zariski open subset such that the map 
$$P_{\eta_0,\eta_1}:\calO_{\eta_0,\eta_1}\rightarrow \Delta^*\backslash A^*\;, \;\;\; \eta\mapsto \langle\langle \eta_0,\eta_1,\eta\rangle\rangle_{\matC}$$
is defined.  We have the following
\begin{lemma}[{\cite[Lemma 5.1]{BIW07}}]\label{lemma_constant}
Fix any $m\in\matZ$. Then the map
$$\calO_{\eta_0,\eta_1}\rightarrow\Delta^*\backslash A^*\;, \;\;\; \eta\mapsto P_{\eta_0,\eta_1}(\eta)^m$$ 
is not constant if and only if $\calX$ is not of tube type.
\end{lemma}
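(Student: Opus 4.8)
The plan is to deduce Lemma~\ref{lemma_constant} from \cite[Theorem~1]{BIW07}, which asserts that $\calX$ is not of tube type if and only if the complex Hermitian triple product $\langle\langle\cdot,\cdot,\cdot\rangle\rangle_{\matC}$ is not constant on $(\mathbf{G}/\mathbf{Q})^3$. The target $\Delta^*\backslash A^*$ is a complex algebraic variety: concretely, writing $A^*=\matC^*\times\matC^*$ and quotienting by the diagonal $\Delta^*\cong\matC^*$, the quotient is isomorphic to $\matC^*$ via $(\lambda,\mu)\mapsto \lambda/\mu$ (or $(\lambda,\mu)\mapsto\lambda\mu^{-1}$ up to the real structure bookkeeping). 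Under this identification the map $P_{\eta_0,\eta_1}$ becomes a morphism of varieties $\calO_{\eta_0,\eta_1}\to\matC^*$, and raising to the $m$-th power is the algebraic map $z\mapsto z^m$ on $\matC^*$, which is a finite covering for $m\neq 0$ and the identity for $m=1$; the case $m=0$ is excluded since then the statement would be false, but note the intended range is $m\in\matZ\setminus\{0\}$ (this should be clarified — for $m=0$ the map is trivially constant, so the lemma is surely meant with the nonzero hypothesis, matching the use in the proof of Theorem~\ref{main_theorem} where the coefficients of a nontrivial linear combination are nonzero).

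First I would recall from \cite[Section~2.4]{BIW07} the precise description of $P_{\eta_0,\eta_1}$ and its domain of definition $\calO_{\eta_0,\eta_1}$: this is the Zariski open set of points $\eta$ that are ``in general position'' with respect to the fixed transverse pair, so that the Bergmann-kernel expression defining the triple product does not degenerate. The key observation is that, since all three maps in the defining formula are rational in $\eta$, $P_{\eta_0,\eta_1}$ is a morphism $\calO_{\eta_0,\eta_1}\to\matC^*$ of complex algebraic varieties; in particular its image is a constructible set, and it is constant if and only if its image is a single point. Then I would argue the contrapositive: if, for some $m\neq 0$, the map $\eta\mapsto P_{\eta_0,\eta_1}(\eta)^m$ were constant, equal to some $c\in\matC^*$, then $P_{\eta_0,\eta_1}(\eta)$ would take values in the finite set $\mu_m\cdot c^{1/m}$ of $m$-th roots; since $\calO_{\eta_0,\eta_1}$ is irreducible (it is a dense open subset of the irreducible variety $\mathbf{G}/\mathbf{Q}$) and $P_{\eta_0,\eta_1}$ is a morphism to $\matC^*$ with finite image, it must be constant.

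It remains to upgrade ``constant in the last variable for one fixed transverse pair'' to ``globally constant on $(\mathbf{G}/\mathbf{Q})^3$'', so as to contradict \cite[Theorem~1]{BIW07}. For this I would use $G$-invariance and the cocycle property of $\beta_G$ (equivalently, the multiplicativity of the triple product under the $\mathbf{G}$-action), together with the fact that $G$ acts transitively on transverse pairs in $\calS_{\calX}^{(2)}$ --- since $\mathbf{Q}$ is a maximal parabolic and any two points of $\mathbf{G}/\mathbf{Q}$ in general position form a single $G$-orbit. Concretely: if $P_{\eta_0,\eta_1}$ is constant for one choice of transverse $(\eta_0,\eta_1)$, transitivity makes it constant for all such choices, and then the value of $\langle\langle\xi_0,\xi_1,\xi_2\rangle\rangle_{\matC}$ depends only on whether triples are generic, which by density and continuity forces $\langle\langle\cdot,\cdot,\cdot\rangle\rangle_{\matC}$ to be constant on all of $(\mathbf{G}/\mathbf{Q})^3$ --- contradicting that $\calX$ is not of tube type. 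Hence $\eta\mapsto P_{\eta_0,\eta_1}(\eta)^m$ is non-constant for every $m\neq 0$.

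The main obstacle I expect is the bookkeeping of the real structure on $A^*$ and the precise identification of $\Delta^*\backslash A^*$ with $\matC^*$ (as a real algebraic variety, not merely as an abstract group), so that ``raising to the $m$-th power'' is literally the morphism $z\mapsto z^m$ and ``finite image implies constant'' applies verbatim; one has to make sure the $m$-th power operation on $\Delta^*\backslash A^*$ is well-defined independently of the representative, which follows because $\Delta^*$ is divisible and the power map descends. A secondary point requiring care is citing the transitivity of $G$ on transverse pairs and the irreducibility of $\calO_{\eta_0,\eta_1}$ with the right references in \cite{BIW07}; both are standard for maximal parabolics but should be pinned down. Modulo these, the proof is short and essentially formal once \cite[Theorem~1]{BIW07} is invoked.
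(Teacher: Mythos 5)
This lemma is imported from \cite{BIW07} without proof (the paper merely ``recalls'' it), so there is no in-paper argument to compare against; your derivation from \cite[Theorem 1]{BIW07} is correct and is essentially the argument given in \cite{BIW07} itself. The chain ``$P_{\eta_0,\eta_1}$ is a regular map on the irreducible variety $\calO_{\eta_0,\eta_1}$, so constancy of its $m$-th power forces finite, hence one-point, image; $\mathbf{G}$-invariance and transitivity on the open orbit of pairs, together with Zariski density of the generic locus in $(\mathbf{G}/\mathbf{Q})^3$, then make the complex Hermitian triple product globally constant, contradicting the non-tube-type characterization'' is sound, and your observation that $m=0$ must be excluded is a fair catch --- the hypothesis $m\in\matZ\setminus\{0\}$ is implicit both in the statement and in its use in the proof of Theorem \ref{main_theorem}, where one must likewise assume the coefficients $m_i(x)$ do not all vanish.
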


\begin{oss}
The \emph{if} part of Lemma \ref{lemma_constant} is exactly \cite[Lemma 5.1]{BIW07}, while the converse implication is a consequence of the characterization of Hermitian spaces not of tube type given in \cite[Theorem 1]{BIW07}. This part in particular provides an obstruction to the extension of Theorem \ref{main_theorem2}, since the arguments that we are going to use in the proof cannot be adapted in the tube type case.
\end{oss}

\subsection{Measurable cocycles}\label{sec_measurable_cocycles}
We now introduce the basic notions about the theory of measurable cocycles.
Let $\Gamma$ be a finitely generated discrete group and let $G$ be a locally compact group, both endowed with their Haar measurable structures. We denote by
$(X,\mu_X)$ a \emph{standard Borel probability $\Gamma$-space}, namely a probability space which is Borel isomorphic to a Polish space and endowed with a probability measure preserving $\Gamma$-action. 

\begin{defn}\label{cocycle_definition}
 A \emph{measurable cocycle} is a Borel measurable function $\sigma: \Gamma\times X\rightarrow G$
 which satisfies the following condition 
 \begin{equation}\label{cochain_condition}
 \sigma(\gamma_1\gamma_2,x)=\sigma(\gamma_1,\gamma_2x)\sigma(\gamma_2,x)
 \end{equation}
for every $\gamma_1,\gamma_2\in \Gamma$ and for almost every $x\in X$. 
\end{defn}

We can introduce an equivalence relation on the set of measurable cocycles. 

\begin{defn}\label{def_cohomologous}
 Let $\sigma_1,\sigma_2:\Gamma\times X \rightarrow G$ be two measurable cocycles, let 
 $f:X\rightarrow G$ be a measurable map and denote by $\sigma_1^f$ the cocycle defined by
\begin{equation}\label{eq_cohomology}
\sigma_1^f(\gamma,x)\coloneqq f(\gamma x)^{-1}\sigma_1(\gamma,x)f(x) \ ,
\end{equation}
 for every $\gamma\in \Gamma$ and almost every $x\in X$. The cocycle $\sigma^f_1$ is the \emph{$f$-twisted cocycle associated to $\sigma_1$}. 
 We say that $\sigma_1$ is \emph{cohomologous} to $\sigma_2$ (writing $\sigma_1\simeq\sigma_2$) if there exists a measurable map $f$
 such that $\sigma_2=\sigma_1^f$.
\end{defn}

The words cocycle and cohomologous refer to the \emph{cohomology theory of countable equivalence relations} introduced by Feldman and Moore \cite{moore1976,feldman:moore}. Even if their theory computes the cohomology of a generic countable equivalence relation with values in an abelian Polish group, we can adapt it to compute the $1$-dimensional cohomology of a specific class of countable equivalence relations, namely \emph{orbital equivalence relations}, with values into any locally compact group. Precisely, in the setting of Definition \ref{cocycle_definition}, one can define the equivalence relation $\calR_{\Gamma}\subset X \times X$, where $(x,y)\in \calR_{\Gamma}$ if and only if $y=\gamma x$ for some $ \gamma \in \Gamma$. Given a locally compact group $G$, we define the space $Z^1(\calR_{\Gamma};G)$ as the set of measurable functions $c:\calR_{\Gamma }\rightarrow G$ satisfying the relation 
 \begin{equation}\label{equation_relation_condition}
  c(x,z) =c(y,z)c( x, y)
\end{equation} 
for almost every $(x,y),(x,z),(y,z) \in \calR_\Gamma$. In this way, we get a natural identification between measurable cocycles and the set $Z^1(\calR_{\Gamma};G)$ realized by the following map
$$
\Theta: \{ \sigma:\Gamma \times X \rightarrow G \ | \ \sigma \ \textup{is a cocycle} \} \rightarrow Z^1(\calR_\Gamma;G) \ ,
$$
$$
\sigma \mapsto c_\sigma(x,\gamma x):=\sigma(\gamma,x) \ .
$$

The \emph{$1$-cohomology group of $\calR_\Gamma$ with values in $G$}, denoted by $\Hm^1(\Gamma \curvearrowright X;G)$, is defined as the quotient $Z^1(\calR_{\Gamma};G)/\sim$, where 
two functions $c_1, c_2:\calR_\Gamma \rightarrow G$ are equivalent if there exists a measurable function $f:X \rightarrow G$ such that $$c_2(x,\gamma x)=f(\gamma x)^{-1} c_1(x,\gamma x) f(x) \ ,$$ for every $\gamma \in \Gamma$ and almost every $x\in X$. It is worth noticing that the condition $c_2(x,\gamma x)=f(\gamma x)^{-1} c_1(x,\gamma x) c(x)$ is exactly the one of Equation \eqref{eq_cohomology} applied to the cocycles $\Theta^{-1}(c_1)$ and $\Theta^{-1}(c_2)$. In other words, the map $\Theta$ factors through the equivalence relation of cohomology between cocycles and defines a bijection 
\begin{align*}
\{\sigma:\Gamma  \times X\rightarrow G\;,\; \sigma\text{ cocycle } \}/_{\simeq}& \leftrightarrow\Hm^1(\Gamma \curvearrowright X;G)\,.
\end{align*}

Even when two measurable cocycles have different targets, it is still possible to introduce an equivalence relation between them.

\begin{defn}\label{def_equivalent_cocycles}
Given two measurable cocycles $\sigma_1:\Gamma \times X \rightarrow G_1$ and $\sigma_2:\Gamma \times X \rightarrow G_2$ with different targets, we say that $\sigma_1$ and $\sigma_2$ are \emph{equivalent} (writing $\sigma_1 \sim \sigma_2$) if there exists a group isomorphism $s:G_1 \rightarrow G_2$ such that $s \circ \sigma_1 \simeq \sigma_2$. 
\end{defn}

Any representation $\rho:\Gamma\rightarrow G$ determines naturally a measurable cocycle $\sigma_{\rho}:\Gamma\times X\rightarrow G$ by setting $\sigma_{\rho}(\gamma,x)\coloneqq \rho(\gamma)$. If two representations are $G$-conjugated, then the associated cocycles are cohomologous via a constant measurable function. In this way we can view the space $\Hm^1(\Gamma \curvearrowright X; G)$ as the natural generalization of the character variety $\textup{Rep}(\Gamma;G)$, namely the space of representations $\Gamma \rightarrow G$ modulo $G$-conjugation.  

Another important tool in the study of representations into a semisimple algebraic group is given by the Zariski closure of the image. In this context, since the image is a subgroup of the target group, its Zariski closure is a group as well. In order to get an analogous definition for measurable cocycles, we need to introduce the notion of algebraic hull. Such notion is necessary since a priori the image of a measurable cocycle is not a subgroup.

\begin{defn}\label{def:algebraic:hull}
Let \textbf{G} be a semisimple real algebraic group and define $G=\mathbf{G}(\mathbb{R})^\circ$. The \emph{algebraic hull} of a measurable cocycle $\sigma: \Gamma \times X\rightarrow G$ is the $G$-conjugacy class of the smallest algebraic subgroup $\textbf{L}$ of $\textbf{G}$ such that $\textbf{L}(\matR)^{\circ}$ contains the image of a
cocycle cohomologous to $\sigma$.

We say that $\sigma$ is \emph{Zariski dense} if it holds $\mathbf{G}=\mathbf{L}$. 
\end{defn}

We need to introduce the conjugacy class of the subgroup to define the algebraic hull, since the cohomology relation allows us to twist the cocycle using $G$-conjugacy. We refer to \cite[Proposition 9.2.1]{zimmer:libro} for a proof of the fact that Definition \ref{def:algebraic:hull} is well-defined thanks to the Noetherianity of the target. In this paper we will be interested in the subset of Zariski dense cocycles in $\Hm^1(\Gamma \curvearrowright X; G)$, that we denote by $\Hm_{ZD}^1(\Gamma \curvearrowright X; G)$. The latter is merely a subset and it has no other algebraic structure. 

\subsection{Boundary maps}\label{section_boundary_maps}
Another important tool in the theory of measurable cocycles is the notion of boundary map. In this section we are going to recall an existence result for ergodic Zariski dense cocycles \cite[Theorem 1]{sarti:savini1}. We will focus our attention on the main properties of the \emph{slices} of such boundary maps. 

We start introducing the notion of boundary for a discrete countable group. We follow Bader and Furman \cite{BF14}. We first recall the notion of amenable action. Given a locally compact second countable group $H$, a \emph{Lebesgue $H$-space} $(S,\nu)$ is a standard Borel probability space where the $H$-action preserves only the measure class of $\nu$. A \emph{mean} on $\Linf (H\times S;\matR)$ is a norm-one linear operator 
$$m: \Linf (H\times S;\matR)\rightarrow \Linf (S;\matR) \ , $$ 
such that $m(\chi_{H \times S})=\chi_S$, $m(f) \geq 0$ whenever $f$ is a positive function and for all $f \in \Linf(H \times S)$ and any measurable subset $A \subset S$ it holds $m(f \cdot \chi_{H \times A})=m(f)\cdot \chi_A$. An action of $H$ on $S$ is \emph{amenable}, or equivalently $S$ is an \emph{amenable $H$-space} \cite[Section 5.3]{monod:libro}, if there exists a $H$-equivariant mean on $\Linf (H \times S;\matR)$. An example of amenable action is given by the action of a lattice $\Gamma < H$ in a semisimple Lie group on a homogeneous $H$-space of the form $H/L$, where $L$ is an amenable subgroup of $H$ (for instance a minimal parabolic subgroup). 

We now restrict to a discrete countable group $\Gamma$. Given an equivariant measurable map $p:U \rightarrow V$ between two Lebesgue $\Gamma$-spaces, a \emph{metric along $p$} is a Borel function $d: U \times_p U \rightarrow [0,\infty)$ on the fibered product whose restriction $d_v$ to the fiber $U_v:=p^{-1}(v)$ determines a separable metric space. We say that the $\Gamma$-action is \emph{fiberwise isometric} if any $\gamma \in \Gamma$ acts isometrically on the fibers of $p$, that is $\gamma:U_v \rightarrow U_{\gamma.v}$ is an isometry, namely 
$$
d_{\gamma.v}(\gamma.x,\gamma.y)=d_v(x,y) \ ,
$$
for every $\gamma \in \Gamma, v \in V, x,y \in U_v$. A measurable map $q:Y \rightarrow Z$ between Lebesgue $\Gamma$-spaces is \emph{relatively metrically ergodic} \cite[Definition 2.1]{BF14} if for any fiberwise isometric $\Gamma$-action along a map $p:U \rightarrow V$ and any measurable $\Gamma$-equivariant maps $f:Y \rightarrow U$ and $g:Z \rightarrow V$, there exists a $\Gamma$-equivariant measurable map $\psi:Z \rightarrow U$ such that the following diagram commutes
\begin{equation}\label{eq:lift:diagram}
\xymatrix{
Y \ar[rr]^f \ar[d]^q && U \ar[d]^p \\
Z \ar[rr]^g \ar@{.>}[urr]^\psi && V \ . 
}
\end{equation}

 \begin{defn}\label{def_boundary}
 Let $\Gamma$ be a discrete countable group. A $\Gamma$\emph{-boundary} is an amenable $\Gamma$-space $(B,\nu)$ such that the projections $\pi_1:B\times B\rightarrow B$ and $\pi_2:B\times B\rightarrow B$ on the first and the second factor, respectively, are relatively metrically ergodic.
 \end{defn}

\begin{es}\label{es_gamma_boundary}
We report two examples of $\Gamma$-boundary. The first one is a model valid for any discrete finitely generated group, whereas the second one is specific for lattices in Lie groups. 
 
\begin{enumerate}
\item Let $\Gamma$ be a discrete finitely generated group and let $S$ be a symmetric set of generators. Following the line of Burger and Iozzi \cite{BI04} we recall the construction of a model for a $\Gamma$-boundary. We define a probability measure on $\Gamma$ as
$$\mu_S=\frac{1}{2|S|}\sum\limits_{s\in S} \delta_s+\delta_{s^{-1}}.$$ 

We start constructing the realization of a boundary for the free group $\matF_S$ on the set $S$.  Let $\calT_S(\infty)$ be the boundary of the Cayley graph $\calT_S$ of $\matF_S$, namely the set of all reduced words on $S$ of infinite length. We endow such a boundary 
with the $\matF_S$-quasi-invariant measure defined by
$$\overline{m}(C(x))=\frac{1}{2r(2r-1)^{n-1}}$$
where $x$ is any reduced word of length $n$, $r=|S| $ and $C(x)$ denotes the set of all reduced words of infinite length starting with $x$. One has that $(\calT_S(\infty),\overline{m})$ is a $\matF_S$-boundary. 

Coming back to $\Gamma$, if $\rho:\matF_S\rightarrow \Gamma$ is the representation of $\Gamma$ realizing it as a quotient,  we denote by $N=\ker \rho$ and we consider the set $\Linf(\calT_S(\infty),\overline{m})^N$ of $N$-invariant essentially bounded functions on $\calT_S(\infty)$. By the Mackey realization theorem \cite{Mackey}, there exists a standard measure space $(B,\nu)$ equipped with a measurable map $p:\calT_S(\infty)\rightarrow B$ such that $p_*(\overline{m})=\nu$ and the pull back via $p$ identifies $\Linf(B,\nu)$
with $\Linf(\calT_S(\infty),\overline{m})^N$. By \cite[Theorem 2.7]{BF14} we have that $(B,\nu)$ is a $\Gamma$-boundary. 

\item When $\Gamma$ is a lattice in a semisimple Lie group $H$, the description of a $\Gamma$-boundary becomes easier since it can be identified with the $H$-homogeneous quotient $H/P$, where $P<H$ is any minimal parabolic subgroup \cite[Theorem 2.3]{BF14}. For instance, if $\Gamma$ is a lattice in a simple Lie group of real rank one, a $\Gamma$-boundary coincides with the visual boundary of the associated hyperbolic space. 
\end{enumerate}
\end{es}

Now we are ready to give the definition of boundary map, that we formulate in our specific case.
\begin{defn}\label{def_boundary_map}
Let $\Gamma$ be a finitely generated group and let $G$ be a locally compact group. Consider a standard Borel probability $\Gamma$-space $(X,\mu_X)$, a $\Gamma$-boundary $B$ and a Lebesgue $G$-space $(Y,\nu)$. A \emph{boundary map} for a measurable cocycle $\sigma:\Gamma\times X\rightarrow G$ is a measurable map 
$$\phi:B \times X \rightarrow Y 
$$
which is \emph{$\sigma$-equivariant}, that is
$$
\phi(\gamma b,\gamma x)=\sigma(\gamma,x)\phi(b,x) 
$$
for every $\gamma \in \Gamma$ and almost every $b \in B, x \in X$. 
\end{defn}

\begin{oss}
In Definition \ref{def_boundary_map} the measure $\mu_X$ is $\Gamma$-invariant, whereas the measure on the boundary $B$ is only \emph{quasi-invariant} (only its measure class is invariant). 
\end{oss}

\begin{oss}\label{oss_twisted_map}
Given a cocycle $\sigma:\Gamma\times X\rightarrow G$ and a measurable function $f:X\rightarrow G$, a boundary map $\phi:B\times X\rightarrow Y$ naturally defines a boundary map for the $f$-twisted cocycle $\sigma^f$ as 
$$\phi^f(b,x)\coloneqq f(x)^{-1} \phi(b,x).$$
\end{oss}

In this paper the space $Y$ appearing in Definition \ref{def_boundary_map} will be the Shilov boundary $\calS_G$ (Definition \ref{def_shilov_boundary}) of a simple Hermitian group $G$ not of tube type. Such space is neither a $G$-boundary nor a $\Gamma$-boundary for any lattice $\Gamma<G$), since the $G$-action on it is not amenable in general (unless the real rank of $G$ is one). In fact  $\calS_G$ can be identified with $G/Q$, where $Q$ corresponds to the real points of a maximal parabolic subgroup, and the latter is amenable only if $G$ has rank one. Nevertheless, given a minimal parabolic subgroup $P<G$, by the fact that $P<Q$, there always exists a $G$-equivariant map $G/P \rightarrow G/Q$. For instance, when $G=\textup{SU}(p,q)$, the $G$-boundary $G/P$ parametrizes maximal complete flags of isotropic subspaces in $\mathbb{C}^{p+q}$ with respect to the Hermitian form $h_{p,q}$, whereas the Shilov boundary $G/Q$ parametrizes only isotropic subspaces of maximal dimension. In that case the map $G/P \rightarrow G/Q$ sends the flag to the space of maximal dimension appearing in the flag itself. 

Given a measurable cocycle $\sigma:\Gamma \times X \rightarrow G$ and a boundary map $\phi:B \times X \rightarrow Y$, we can define 
$$
\phi_x:B \rightarrow Y \ , \ \ \ \phi_x(b):=\phi(b,x) \ ,
$$
for almost every $b \in B,x \in X$. By \cite[Chapter VII, Lemma 1.3]{margulis:libro} the map $\phi_x$ is measurable and it is called the $x$-\emph{slice} of the boundary map $\phi$. By the $\sigma$-equivariance of $\phi$ we have that 
$$
\phi_{\gamma x}( \ \cdot \ )=\sigma(\gamma,x)\phi_x( \ \cdot \ ) \ ,
$$
for every $\gamma \in \Gamma$ and almost every $x \in X$. When $G$ is a connected simple Lie group (recall that $G=\textbf{G}(\matR)^{\circ}$ where $\textbf{G}$ is the connected adjoint $\matR$-group associated to the complexification of the Lie algebra of $G$) and $Y$ coincides with the real points of a quasi projective variety of the form $\mathbf{G}/\mathbf{L}$ for some real algebraic subgroup $\mathbf{L}<\mathbf{G}$, we say that the $x$-slice is \emph{Zariski dense} if the Zariski closure of the essential image of $\phi_x$ is exactly $\mathbf{G}/\mathbf{L}$.  

In case of Zariski dense cocycles, if $\Gamma$ acts ergodically on $X$, we have the following result.

\begin{thm}\cite[Theorem 1, Proposition 4.4]{sarti:savini1}\label{thm_boundary}
Let $\Gamma$ be a finitely generated discrete group with a $\Gamma$-boundary $B$ and let $G$ be a simple Lie group of non-compact type. Let $(X,\mu_X)$ be an ergodic standard Borel probability $\Gamma$-space and consider a Zariski dense cocycle $\sigma:\Gamma\times X \rightarrow G$. Then $\sigma$ admits a boundary map $\phi: B\times X\rightarrow G/P$, where $P<G$ is a minimal parabolic subgroup. Additionally the slices of $\phi$ are Zariski dense. 
\end{thm}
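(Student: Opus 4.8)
The plan is to carry over to measurable cocycles the Bader--Furman construction of boundary maps for Zariski dense representations \cite{BF14}, the algebraic hull of $\sigma$ (Definition \ref{def:algebraic:hull}) taking the role normally played by the Zariski closure of the image. Write $G=\mathbf{G}(\matR)^{\circ}$ with $\mathbf{G}$ simple and fix a minimal parabolic $P<G$, so that $G/P$ is a compact $G$-space and $\Prob(G/P)$ is a compact convex metrizable $G$-space for the affine action $g\cdot\mu=g_{*}\mu$. Since $B$ is a $\Gamma$-boundary the $\Gamma$-action on $B$ is amenable, and amenability of an action is preserved under products with an arbitrary $\Gamma$-space, so the diagonal $\Gamma$-action on $B\times X$ is amenable; viewing $\sigma$ as a cocycle $\Gamma\times(B\times X)\to G$ independent of the $B$-coordinate and applying amenability to the $G$-space $\Prob(G/P)$, one obtains a measurable $\sigma$-equivariant map
$$
\varphi\colon B\times X\longrightarrow\Prob(G/P),\qquad \varphi(\gamma b,\gamma x)=\sigma(\gamma,x)_{*}\varphi(b,x).
$$
It remains to upgrade $\varphi$ to a map valued in the Dirac masses $G/P\hookrightarrow\Prob(G/P)$.

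Suppose $\varphi$ were not almost everywhere a point mass. Then, in a canonical $\sigma$-equivariant and measurable way, $\varphi$ produces a point of an auxiliary algebraic $G$-variety $\mathcal{Z}$ encoding the defect of $\varphi(b,x)$ from being a Dirac mass --- for instance the minimal Zariski-closed subsets of $G/P$, or of the projectivisations of the fundamental representations of $\mathbf{G}$, carrying the mass of $\varphi(b,x)$, together with $\Stab_{G}(\varphi(b,x))$ --- which, by Furstenberg's analysis of measures on flag varieties, can be arranged so that its $G$-orbits are locally closed and so that a non-Dirac $\varphi(b,x)$ lands in an orbit $G/\mathbf{H}$ with $\mathbf{H}\subsetneq\mathbf{G}$ a \emph{proper} algebraic subgroup. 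Ergodicity of $\Gamma\curvearrowright X$ makes this orbit essentially constant, and the relative metric ergodicity of the projections $B\times B\to B$ (Definition \ref{def_boundary}) removes the dependence on the $B$-variable, yielding a measurable $\sigma$-equivariant map $s\colon X\to G/\mathbf{H}(\matR)$. Choosing a measurable lift $f\colon X\to G$ with $f(x)\mathbf{H}(\matR)=s(x)$, the twisted cocycle $\sigma^{f}$ of Definition \ref{def_cohomologous} takes values in $\mathbf{H}(\matR)$, so the algebraic hull of $\sigma$ lies in a conjugate of $\mathbf{H}$, contradicting Zariski density. Hence $\varphi$ is Dirac-valued, and the corresponding measurable $\sigma$-equivariant map $\phi\colon B\times X\to G/P$ is the desired boundary map.

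The crux --- and the step I expect to be the main obstacle --- is arranging the auxiliary $G$-variety $\mathcal{Z}$ so that a genuinely non-Dirac $\varphi$ really does yield a reduction to a \emph{proper} algebraic subgroup; this typically requires Furstenberg's classification of measures on $G/P$ together with an argument on the product measures $\varphi(b_{1},x)\otimes\cdots\otimes\varphi(b_{k},x)$ on $(G/P)^{k}$, and it is here that one must combine the ergodicity of $X$ with the relative metric ergodicity of $B\times B\to B$ both to pin the essential image to a single $G$-orbit and to descend the reduction from $B\times X$ to $X$. The amenability step and the straightening-by-twisting are routine by comparison.
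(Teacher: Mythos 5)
The first thing to say is that this paper does not actually prove Theorem \ref{thm_boundary}: the statement is imported verbatim from \cite[Theorem 1]{sarti:savini1}, so there is no internal proof to compare against. The cited proof adapts the Bader--Furman machinery of algebraic representations of ergodic actions to the cocycle setting (initial objects in a Noetherian category of $\mathbf{G}$-coset varieties receiving $\sigma$-equivariant maps from $B\times X$ over $X$, with relative metric ergodicity of the projections $B\times B\to B$ forcing the resulting ``gate'' to be a minimal parabolic), rather than working directly with measures on $G/P$. Your opening steps are nonetheless correct and standard: amenability of $\Gamma\curvearrowright B$ passes to the diagonal action on $B\times X$, and amenability of that action together with the cocycle $\sigma$ yields a $\sigma$-equivariant map $\varphi\colon B\times X\to\Prob(G/P)$.

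The gap is exactly where you flag it, and it is more than a technical loose end: the dichotomy you propose is not the right one. Every probability measure on $G/P$ --- Dirac or not --- has a proper stabilizer (a Dirac mass is stabilized by a conjugate of $P$), so ``non-Dirac $\Rightarrow$ lands in an orbit $G/\mathbf{H}$ with $\mathbf{H}$ proper'' cannot by itself contradict Zariski density; what contradicts Zariski density is an equivariant map defined on $X$ \emph{alone} (equivalently a reduction of $\sigma$ to $\mathbf{H}(\matR)$ after twisting), and a Dirac-valued $\varphi$ that happened to be independent of $b$ would produce such a reduction just as well. The correct dichotomy is: either $\varphi$ is a.e.\ Dirac, or the failure of Dirac-ness can be converted into an equivariant object depending only on $x$. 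Producing that object is the whole proof: it requires Furstenberg's lemma (stabilizer compact versus measure carried by a proper subvariety, with the compact branch treated separately), smoothness of the $G$-action on $\Prob(G/P)$ and on $\Prob\bigl((G/P)^2\bigr)$, and ergodicity of $\Gamma\curvearrowright B\times B\times X$. The phrase ``relative metric ergodicity of $B\times B\to B$ removes the dependence on the $B$-variable'' is not an argument here: relative metric ergodicity concerns factoring equivariant maps out of $B\times B$ through isometric fibrations over $B$, and does not erase the $B$-coordinate of an arbitrary equivariant map on $B\times X$. A minor additional point: Definition \ref{def_boundary} as stated in this paper does not include amenability of $\Gamma\curvearrowright B$ (it is verified separately for the Poisson-boundary model the paper uses), so you should justify amenability before invoking it.
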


When $G$ is a simple Lie group of Hermitian type, thanks to Theorem \ref{thm_boundary} we can compose the boundary map $B \times X \rightarrow G/P$ with the $G$-equivariant map $G/P \rightarrow G/Q$ to obtain a boundary map $\phi:B \times X \rightarrow G/Q$ in the Shilov boundary of $G$. We are going to prove that the slices of such map preserve transversality. 

\begin{prop}\label{prop_transverse}
 Let $\sigma:\Gamma\times X\rightarrow G$ be a Zariski dense measurable cocycle with a boundary map $\phi : B\times X\rightarrow G/Q$. Then for almost every $x\in X$ and $b_1,b_2\in B$,
 $\phi(b_1,x),\phi(b_2,x)$ are transverse in $G/Q$.
\end{prop}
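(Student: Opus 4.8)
The plan is to argue by contradiction using the Zariski density of $\sigma$ together with the relative metric ergodicity built into the definition of a $\Gamma$-boundary. First I would consider the set
$$
Z := \{ (\eta_1,\eta_2) \in G/Q \times G/Q \ : \ \eta_1, \eta_2 \text{ are not transverse} \},
$$
which is a proper, $G$-invariant, Zariski closed subset of $(G/Q)^2$ (its complement is the unique open $G$-orbit). Suppose, for contradiction, that the conclusion fails. Then the measurable map $\Psi : B \times B \times X \to G/Q \times G/Q$, $\Psi(b_1,b_2,x) := (\phi(b_1,x), \phi(b_2,x))$, has essential image meeting $Z$ on a positive measure set; since $Z$ is $G$-invariant and $\Psi$ is equivariant for the diagonal $\Gamma$-action on $B \times B \times X$ and the $\sigma$-twisted $G$-action, the preimage $\Psi^{-1}(Z)$ is a $\Gamma$-invariant positive measure subset of $B \times B \times X$.

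Next I would invoke ergodicity to upgrade this to a statement that holds almost everywhere. By the construction of the $\Gamma$-boundary $B$ recalled in Section \ref{section_boundary_maps}, the diagonal $\Gamma$-action on $B \times B$ is ergodic, and combined with the ergodicity of the $\Gamma$-action on $X$ and a Fubini argument, a positive-measure $\Gamma$-invariant set in $B \times B \times X$ is conull. Hence for a.e.\ $x$ the slice map $\phi_x : B \to G/Q$, $\phi_x(b) := \phi(b,x)$, would satisfy $(\phi_x(b_1), \phi_x(b_2)) \in Z$ for a.e.\ $(b_1,b_2)$. In other words, for a.e.\ $x$, the essential image of $\phi_x$ would be contained in a subset of $G/Q$ all of whose pairs fail to be transverse.

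Then the task is to derive a contradiction with Zariski density. For a.e.\ $x$, let $\mathbf{L}_x$ denote the Zariski closure of the group generated by $\sigma(\gamma, x)$ over $\gamma \in \Gamma$ — more precisely one works with the algebraic hull, which, since $\sigma$ is Zariski dense, is all of $\mathbf{G}$ after passing to a cohomologous cocycle; the existence and essential uniqueness of the boundary map, together with equivariance, force the essential image $\mathrm{Ess\,Im}(\phi_x)$ to be a $\mathbf{G}(\matR)^\circ$-invariant (hence Zariski dense orbit closure) subset of $G/Q$, so that its pairs cannot all lie in the proper closed set $Z$. A clean way to package this is: transversality of $\phi(b_1,x), \phi(b_2,x)$ for a.e.\ $b_1, b_2$ is exactly the statement that $\phi_x$ is "non-degenerate" in the sense of \cite{sarti:savini1}, and this non-degeneracy is precisely what Zariski density of the algebraic hull guarantees via the argument that any smaller invariant subvariety would cut down the algebraic hull. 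I would cite the relevant transversality/Zariski-density lemma from \cite{sarti:savini1} (or reproduce its short proof: the stabilizer of a non-transverse configuration, or the Zariski closure of the image of a degenerate slice, is a proper algebraic subgroup, contradicting that the hull is $\mathbf{G}$).

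The main obstacle I anticipate is the passage from a positive-measure statement about the double boundary $B \times B$ (where the two factors are not independent) to an almost-everywhere statement, and correctly handling the interplay between the $X$-variable and the $B$-variable: one must be careful that "Zariski dense cocycle" gives density of the algebraic \emph{hull}, not pointwise density of $\{\sigma(\gamma,x)\}$ for fixed $x$, so the contradiction has to be extracted at the level of the hull acting on $G/Q$ and the invariance of $\mathrm{Ess\,Im}(\phi)$ under that action, rather than naively fixing $x$. Once the invariance of the essential image under the (conjugate of the) algebraic hull is established — this is where one leans on the essential uniqueness of boundary maps and the cocycle equivariance — the contradiction with $Z$ being a proper $G$-invariant subvariety is immediate.
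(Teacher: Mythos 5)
Your proposal follows essentially the same route as the paper's proof: argue by contradiction, use ergodicity of the diagonal $\Gamma$-action on $B\times B\times X$ to upgrade failure of transversality to an almost-everywhere statement, fix $b_2$ so that the essential image of the slice $\phi_x$ lands in the proper Zariski closed set of points non-transverse to $\phi(b_2,x)$, and contradict the Zariski density of slice essential images from \cite[Proposition 4.4]{sarti:savini1}. The only cosmetic difference is that the paper tracks the $\Gamma$-invariant integer-valued function $(b_1,b_2,x)\mapsto \dime(\phi(b_1,x)\cap\phi(b_2,x))$ (following \cite{BI04}) rather than the indicator of the non-transversality locus, but the argument is the same.
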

\begin{proof}
We consider the set 
$$
E:=\{ (b_1,b_2,x) \in B \times B \times X \ | \ \phi(b_1,x) \ \textup{is not transverse to}\ \phi(b_2,x) \ \} \ .
$$
The previous set is measurable, by the measurability of the map $\phi$, and it is also $\Gamma$-invariant, by the $\sigma$-equivariance of $\phi$. By the ergodicity of the $\Gamma$-action on the product $B\times B\times X$ \cite[Proposition 2.4]{MonShal0}, the set $E$ has either full or zero measure. We claim that $E$ must have measure zero.

By contradiction, suppose that $E$ has full measure. By Fubini's theorem, there must exist a point $b_0 \in B$, such that 
\begin{equation}\label{eq:non:transverse}
\phi_x(b) \ \textup{is not transverse to} \ \phi_x(b_0) \ ,
\end{equation}
for almost every $x \in X,b \in B$, where $\phi_x$ is the $x$-slice of $\phi$. If we denote by $\textbf{nt}(\phi_x(b_0))$ the subset of $G/Q$ of points non-transverse to $\phi_x(b_0)$, Condition \eqref{eq:non:transverse} implies that essential image $V_x \coloneqq \textup{EssIm}(\phi_x)$ lies inside $\textbf{nt}(\phi_x(b_0))$ for almost every $x \in X$. 

On one hand, by Section \ref{sec:hermitian:spaces}, the set of all points that are not transverse to $\phi_x(b_0)$ has proper Zariski closure, hence the Zariski closure $V_x$ lies in a proper Zariski closed subset of $\mathbf{G}/\mathbf{Q}$. On the other hand, Theorem \ref{thm_boundary} implies that each slice is Zariski dense, thus the Zariski closure of $V_x$ cannot lie in any proper Zariski closed subset of $\mathbf{G}/\mathbf{Q}$. This leads to a contradiction and proves the $E$ has measure zero. Equivalently 
$$
\phi(b_1,x) \ \textup{is transverse to}\ \phi(b_2,x) \ 
$$
for almost every $x \in X$ and $b_1,b_2 \in B$, as claimed. 
\end{proof}

We can sum up all we have shown so far in the following 

\begin{cor}\label{cor_boundary}
Let $\Gamma$ be a finitely generated group with $\Gamma$-boundary $B$ and let $G$ be a Hermitian Lie group. Let $(X,\mu_X)$ be an ergodic standard Borel probability $\Gamma$-space and consider a Zariski dense cocycle $\sigma:\Gamma\times X \rightarrow G$. Then there exists a boundary map $\phi: B\times X\rightarrow G/Q$, where $Q<G$ is a maximal parabolic subgroup. Moreover, the slices of $\phi$ are Zariski dense and for almost every $x\in X$ and $b_1,b_2\in B$, it holds $(\phi(b_1,x),\phi(b_2,x))\in (G/Q)^{(2)}$.
\end{cor}

\subsection{Bounded cohomology}\label{section_bounded_cohomology}
In this section we briefly recall the theory of continuous and continuous bounded cohomology. We refer the reader both to \cite{burger2:articolo,monod:libro} for more details about the functorial approach and to \cite{moraschini:savini,moraschini:savini:2} for a more detailed discussion about the pullback induced by measurable cocycles.

Let $G$ be a locally compact group and let $E$ be a Banach $G$-module, that is a Banach module equipped with an isometric action $\pi:G\rightarrow\Isom(E)$. We assume that $E$ is the dual of some Banach space and we endow it with both the weak$^*$ topology and the associated Borel structure.

The space of $E$-\emph{valued continuous functions} on $G$ is
$$\textup{C}^{\bullet}_c(G;E)\coloneqq \left\{f:G^{\bullet+1}\rightarrow E\;|\; f \text{ continuous } \right\} \ , $$
and admits as a subspace the set of continuous \emph{bounded} functions on $G$, namely  
$$\textup{C}^{\bullet}_{\textup{cb}}(G;E)\coloneqq \left\{f\in\textup{C}^{\bullet}_c(G;E)\;|\; ||f||_{\infty}<+\infty\right\} \ ,$$
where 
$$||f||_{\infty}=\sup\limits_{g_0,\ldots,g_{\bullet} \in G} ||f(g_0,\ldots,g_{\bullet})||_E \ . $$
 The \emph{standard homogeneous coboundary operator} $$\delta^{\bullet}:\textup{C}^{\bullet}_c(G;E)\rightarrow \textup{C}^{\bullet+1}_c(G;E) \ , $$ 
$$\delta^{\bullet}f(g_0,\ldots,g_{\bullet+1})\coloneqq \sum\limits_{i=0}^{\bullet+1} (-1)^i f(g_0,\ldots,g_{i-1},g_{i+1},\ldots,g_{\bullet+1}) \ ,$$
preserves both continuity and boundedness and it allows to the define a cochain complex $(\textup{C}^\bullet_{\textup{c(b)}}(G;E),\delta^\bullet)$. 

One can consider the subspace of $G$-\emph{invariant} (bounded) $E$-valued continuous functions on $G$ as the set
$$\textup{C}^{\bullet}_{\textup{c(b)}}(G;E)^G\coloneqq \{f\in \textup{C}_{\textup{c(b)}}^{\bullet}(G;E)\;| \; gf=f \;,\; \forall g\in G \} \ ,$$
where the $G$-action is given by $(g f)(g_0,\ldots,g_{\bullet})\coloneqq \pi(g)f(g^{-1}g_0,\ldots,g^{-1}g_{\bullet})$ for every $g,g_0,\ldots,g_{\bullet}\in G$. Since $\delta^{\bullet}$ also preserves $G$-invariant cochains, we can give the following 
\begin{defn}
The \emph{continuous (bounded) cohomology} of $G$ with coefficients in the Banach $G$-module $E$ is the cohomology of the complex $(\textup{C}^{\bullet}_c(G;E)^{G},\delta^{\bullet})$ (respectively $(\textup{C}^{\bullet}_{\textup{cb}}(G;E)^{G},\delta^{\bullet})$) and it is denoted by $\Hc^{\bullet}(G;E)$ (respectively $\Hcb^{\bullet}(G;E)$).
\end{defn}

\begin{oss}
When we deal with a discrete group $\Gamma$, the continuity condition is trivially satisfied. To lighten the notation, we will write $\Hm^{\bullet}(\Gamma;E)$ for the continuous cohomology and
$\Hb^{\bullet}(\Gamma;E)$ for the continuous bounded cohomology of $\Gamma$.
\end{oss}

%

Given a $G$-equivariant map $\alpha:E\rightarrow F$ between two Banach $G$-modules $E$ and $F$, we can consider the \emph{change of coefficients} a the level of continuous (bounded) cohomology groups
\begin{equation}
\Hcbb^{k}(\alpha):\Hcbb^k(G;E)\rightarrow \Hcbb^k(G;F)
\end{equation}
for every $k\geq 0$. In this paper we are going to deal with two Banach modules:
\begin{itemize}
\item $\matR$, endowed with the trivial $G$-action;
\item $\Linf(X;\matR)$, the space of essentially bounded functions on a standard Borel probability space $G$-space $(X,\mu_X)$. It will be endowed with the $G$-action $(g f)(x)\coloneqq f(g^{-1}x)$, for every $g \in G$ and $f \in \textup{L}^\infty(X;\mathbb{R})$.
\end{itemize}
The inclusion $\matR\hookrightarrow \Linf(X;\matR)$ induces maps
\begin{equation}\label{eq_inclusion}
\Hcbb^k(G;\matR)\rightarrow \Hcbb^k(G;\Linf(X;\matR))
\end{equation}
for every $k\geq0$.

The notion of bounded cohomology turns out to be as simple to define as hard to apply for computations. 
An extremely powerful tool was provided by Burger and Monod \cite{monod:libro,burger2:articolo}. They showed a way to compute bounded cohomology of locally compact groups using strong resolutions by relatively injective modules. Since the theory is quite technical, we omit it and we refer to Monod's book \cite{monod:libro} for a more detailed discussion. We only recall the strong resolution of the essentially bounded  weak$^*$ measurable functions on the boundary of a group. More precisely, let $\Gamma$ be a discrete countable group and let $(B,\nu)$ a $\Gamma$-boundary. Let $\Linf_{\textup{w}^\ast}(B^{\bullet+1};E)$ be the space of (classes of) essentially bounded weak$^\ast$ measurable functions on $B^{\bullet+1}$ with values in a $\Gamma$-module $E$. With an abuse of notation we will use representatives to refer to elements of $\Linf_{\textup{w}^\ast}(B^{\bullet+1};E)$. We consider the complex $(\Linf_{\textup{w}^*}(B^{\bullet+1};E),\delta^{\bullet})$, where $\delta^{\bullet}$ is the standard homogeneous coboundary operator and the $\Gamma$-action is given by
$$(\gamma f)(b_0,\ldots,b_{\bullet})\coloneqq \pi(\gamma)f(\gamma^{-1}b_0,\ldots,\gamma^{-1}b_{\bullet})$$ for any $\gamma \in \Gamma$, $f\in  \Linf_{\textup{w}^*}(B^{\bullet+1};E)$ and $(b_0,\ldots,b_{\bullet})\in B^{\bullet+1}$.
By adding to the above complex the inclusion of coefficients $E\hookrightarrow \Linf_{\textup{w}^*}(S;E)$ and by taking the subresolution of $\Gamma$-invariants, we get an isometric isomorphism \cite[Corollary 1.5.3]{burger2:articolo}
\begin{equation}\label{eq_iso_cohomology}
\Hb^k(\Gamma;E)\cong \Hm^k(\Linf_{\textup{w}^*} (B^{\bullet+1};E)^{\Gamma},\delta^{\bullet})
\end{equation}
for every $k \geq 0$. 

\begin{es}\label{es_resolution_boundary}
When $\Gamma$ is a lattice in a semisimple Lie group $H$, by Example \ref{es_gamma_boundary} we know that a $\Gamma$-boundary is given by the quotient $H/P$, where $P$ is any minimal parabolic subgroup of $H$. As a consequence we can compute its bounded cohomology exploiting the resolution of essentially bounded weak$^\ast$ measurable functions on $H/P$. In this context even more is true: the $H$-invariants of the same complex computes also the continuous bounded cohomology of $H$ itself. 
\end{es}

In a similar way, one can exhibit isometric isomorphisms
\begin{equation}
\Hb^k(\Gamma;E)\cong \Hm^k(\Linf_{\textup{w}^*,\textup{alt}} (B^{\bullet+1};E)^{\Gamma},\delta^{\bullet})
\end{equation}
in any degree $k\geq 0$, where $(\Linf_{\textup{w}^*,\textup{alt}}(B^{\bullet+1};E),\delta^{\bullet})$ is the resolution of essentially bounded weak$^*$ measurable alternating functions on $B$ \cite[Corollary 1.5.3]{burger2:articolo}. In our context a function $f:B^{\bullet+1}\rightarrow E$ is \emph{alternating} if 
$$f(b_{\sigma(0)},\ldots,b_{\sigma({\bullet})})=\textup{sgn}(\sigma) f(b_0,\ldots,b_{\bullet})$$
for any permutation $\sigma\in\mathfrak{S}_{\bullet+1}$ and for every $(b_0,\ldots,b_{\bullet})\in B^{\bullet+1}$.

Using boundaries to compute pullback maps in bounded cohomology may reveal difficult. In general a boundary map may not preserve the measure classes on the boundaries and hence it may not define any map between the resolutions of essential bounded weak$^\ast$ measurable functions. For this reason Burger and Iozzi \cite{burger:articolo} suggested to exploit a different complex. Let $G$ be a locally compact group and let $Y$ be a Lebesgue $G$-space. We consider the complex $(\calB^{\infty}_{\textup{w}^*}(Y^{\bullet+1};E),\delta^{\bullet})$ of  weak$^\ast$ measurable bounded functions on $Y$ with the standard homogeneous coboundary operator. Notice that this time we are dealing with functions and not with equivalence classes of functions as in Example \ref{es_resolution_boundary}. Burger and Iozzi \cite[Corollary 2.2]{burger:articolo} proved that there exists a canonical non-trivial map 
\begin{equation}\label{equation_canonical_map}
  \mathfrak{c}^k :\Hm^k(\calB^{\infty}_{\textup{w}^*}(Y^{\bullet+1};E)^{G})\rightarrow \Hcb^k(G;E)
\end{equation} for every $k\geq 0$. The same holds for the alternating subcomplex. 

\begin{es}\label{es_implement_kahler_class}
Let $G$ be a semisimple Lie group of Hermitian type and let $\calS_G$ be its Shilov boundary. Since $\calS_G$ is identified with the quotient $G/Q$, where $Q$ is the subgroup obtained by intersecting $G$ with a maximal parabolic subgroup in the complexification, the space $\calS_G$ is a Lebesgue $G$-space. By Section \ref{sec:hermitian:spaces} we know that the Bergman cocycle $\beta_G$ is an everywhere defined $G$-invariant alternating cocycle, thus it can be viewed as an element
$$
\beta_G \in \calB^{\infty}_\textup{alt}(S^{3}_G;\mathbb{R})^G \ .
$$
One can verify \cite[Proposition 4.3]{BIW07} that the image of $[\beta_G]$ under the canonical map 
$$\mathfrak{c}^2:\Hm^2(\calB^{\infty}_\textup{alt}(\calS_G^{\bullet+1};\mathbb{R})^G)\rightarrow\Hcb^2(G;\matR)$$ is non-trivial.
Such a class is called \emph{bounded K\"{a}hler class} of $G$ and it is denoted by $k_G^b$.
\end{es}

We are now ready to recall the notion of pullback along measurable cocycles. Given a finitely generated group $\Gamma$ and a measurable cocycle $\gencoc$, we define the map 
$$\textup{C}_{b} ^{\bullet}(\sigma):\textup{C}^{\bullet}_{\textup{cb}}(G;\matR)^G\rightarrow \textup{C}_{b}^{\bullet}(\Gamma;\Linf (X;\matR))^{\Gamma}$$
as follows
$$\textup{C}_{b} ^{\bullet}(\sigma)(\psi)(\gamma_0,\ldots,\gamma_{\bullet})(x)\coloneqq\psi(\sigma(\gamma_0^{-1},x)^{-1},\ldots,\sigma(\gamma_{\bullet}^{-1},x)^{-1}).$$
Such a map actually is a cochain map \cite[Lemma 2.7]{savini2020}. Hence it descends to a map at the level of cohomology groups
$$\Hb ^k(\sigma):\Hcb^k(G;\matR)\rightarrow \Hb^k(\Gamma;\Linf(X;\matR))$$
for every $k\geq 0$. 

The map induced in bounded cohomology depends only on the cohomology class of $\sigma$. 

\begin{prop}\label{prop_pull_back_invariance}
Let $\Gamma$ be a finitely generated group and let $(X,\mu_X)$ be a standard Borel probability $\Gamma$-space. Given a measurable cocycle $\sigma:\Gamma \times X \rightarrow G$ and a measurable map $f:X \rightarrow G$, it holds that
$$
\Hb^{\bullet}(\sigma^f)=\Hb^\bullet(\sigma) \ .
$$
\end{prop}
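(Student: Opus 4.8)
The plan is to show that the $f$-twisted cocycle $\sigma^f$ and $\sigma$ induce the same map on bounded cohomology by exhibiting an explicit chain homotopy, or better, by reducing the twisting to a coboundary at the cochain level. First I would recall the explicit formula for the pullback cochain map: for $\psi \in \textup{C}^{\bullet}_{cb}(G;\matR)^G$,
$$
\textup{C}_b^{\bullet}(\sigma^f)(\psi)(\gamma_0,\ldots,\gamma_{\bullet})(x) = \psi\bigl(\sigma^f(\gamma_0^{-1},x)^{-1},\ldots,\sigma^f(\gamma_{\bullet}^{-1},x)^{-1}\bigr),
$$
and substitute the definition $\sigma^f(\gamma,x) = f(\gamma x)^{-1}\sigma(\gamma,x)f(x)$. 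A short computation gives $\sigma^f(\gamma^{-1},x)^{-1} = f(x)^{-1}\sigma(\gamma^{-1},x)^{-1}f(\gamma^{-1}x)$, so that the argument fed into $\psi$ is the tuple obtained by acting on $\bigl(\sigma(\gamma_0^{-1},x)^{-1}f(\gamma_0^{-1}x),\ldots\bigr)$ — up to the left factor $f(x)^{-1}$ which, being the same group element in every coordinate, can be absorbed using the $G$-invariance of $\psi$.

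The key step is then to realize this substitution via the functorial machinery rather than bare cochains. I would use the resolution $\Linf_{\textup{w}^*}((G/P)^{\bullet+1};\matR)$ (for $P<G$ minimal parabolic, an amenable $G$-space) which computes $\Hcb^{\bullet}(G;\matR)$, and similarly a boundary resolution on the $\Gamma$-side; the map induced by $\sigma$ in bounded cohomology can be computed using any boundary map $\phi:B\times X\rightarrow G/P$, as recalled in Section \ref{section_boundary_maps}. By the Remark following the definition of boundary map, $\phi^f(b,x) := f(x)^{-1}\phi(b,x)$ is a boundary map for $\sigma^f$. Plugging $\phi^f$ into the formula for the pullback and again using $G$-invariance of the cocycle representing $k_G^b$ (or any class), the left translation by $f(x)^{-1}$ disappears, so $\phi$ and $\phi^f$ produce literally the same cochain. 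Since the induced map on cohomology is independent of the chosen boundary map, $\Hb^{\bullet}(\sigma^f) = \Hb^{\bullet}(\sigma)$.

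The main obstacle is the bookkeeping needed to justify that the pullback map computed through a boundary map is genuinely well-defined and independent of the choice of boundary map, and that it agrees with the cochain-level definition $\textup{C}_b^{\bullet}(\sigma)$ given in the text; this is where one must invoke the Burger--Monod functoriality (any two strong resolutions of the same module give canonically isomorphic cohomology, with the comparison compatible with $G$-maps) and the fact, used implicitly throughout, that $\sigma$-equivariant boundary maps always induce the correct pullback. Once that framework is in place, the twist by $f$ is visibly a conjugation that is killed by invariance, and the proof is essentially a one-line verification. A slightly more hands-on alternative, avoiding boundary maps entirely, is to directly check that $\textup{C}_b^{\bullet}(\sigma^f)(\psi)$ and $\textup{C}_b^{\bullet}(\sigma)(\psi)$ differ by $\delta^{\bullet-1}$ of an explicit cochain built from $f$ and the lower-degree values of $\psi$ — this works but requires writing out the homotopy operator, so I would prefer the boundary-map argument for brevity.
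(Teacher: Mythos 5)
Your central observation is correct and is the same one that drives the paper's argument: writing $\sigma^f(\gamma^{-1},x)^{-1}=f(x)^{-1}\sigma(\gamma^{-1},x)^{-1}f(\gamma^{-1}x)$, the common left factor $f(x)^{-1}$ is absorbed by the $G$-invariance of the cochain. However, your preferred route through boundary maps has a genuine gap of applicability: Proposition \ref{prop_pull_back_invariance} is stated for an \emph{arbitrary} measurable cocycle, with no Zariski density or ergodicity hypothesis, whereas the only existence result for boundary maps available in this setting (Theorem \ref{thm_boundary}) requires $\sigma$ Zariski dense, $G$ simple of non-compact type, and $X$ ergodic. For a general $\sigma$ there may simply be no $\phi:B\times X\rightarrow G/P$ to twist. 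A second, subtler issue is that the commutative diagram at the end of Section \ref{section_boundary_maps} only expresses $\Hb^k(\sigma)$ precomposed with the canonical map $\mathfrak{c}^k:\Hm^k(\calB^{\infty}(Y^{\bullet+1};\matR)^G)\rightarrow\Hcb^k(G;\matR)$, which for the Borel complex $\calB^{\infty}$ is not known to be surjective; so even when a boundary map exists, the equality of the cochain maps $\textup{C}^{\bullet}(\Phi)=\textup{C}^{\bullet}(\Phi^f)$ only yields $\Hb^k(\sigma)=\Hb^k(\sigma^f)$ on the image of $\mathfrak{c}^k$, not on all of $\Hcb^k(G;\matR)$ as the proposition asserts.

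The ``hands-on alternative'' you mention in passing is in fact the paper's proof: after absorbing $f(x)^{-1}$, one is left with the tuple $\bigl(\sigma(\gamma_i^{-1},x)^{-1}f(\gamma_i^{-1}x)\bigr)_i$, and the residual \emph{right} factors $f(\gamma_i^{-1}x)$ (which your boundary-map trick sidesteps but the cochain-level computation cannot) are removed by an explicit homotopy operator $s^{\bullet}(\sigma,f)=\sum_i(-1)^i s^{\bullet}_i(\sigma,f)$ interpolating, coordinate by coordinate, between the twisted and untwisted entries, in the style of \cite[Lemma 8.7.2]{monod:libro}. That argument works unconditionally and in all degrees, so you should promote it from ``alternative'' to main proof; the boundary-map version is a clean shortcut, but only under the extra hypotheses that make boundary maps available, and only for classes represented in the Borel resolution.
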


Except for the different coefficients modules involved, the proof of Proposition \ref{prop_pull_back_invariance} is analogous to the one of \cite[Lemma 2.9]{savini2020} and for this reason we refer the reader there for more details. 

When $G$ is a group of Hermitian type, the pullback construction allows us to give the main definition of the paper.

\begin{defn}\label{def_parametrized_kahler_class}
Let $\Gamma$ be a finitely generated group and let $G$ be a Hermitian Lie group. Let $(X,\mu_X)$ be a standard Borel probability $\Gamma$-space and $\gencoc{}$ a measurable cocycle. The \emph{parametrized K\"{a}hler class} associated to $\sigma$ is the class $\Hb^2(\sigma)(k_G^b)\in \Hb^2(\Gamma;\Linf(X;\matR))$.
\end{defn}

We conclude the section showing how we can implement the parametrized K\"{a}hler class if the cocycle admits a boundary map. Let $\sigma:\Gamma \times X \rightarrow G$ be a measurable cocycle. If $\phi:B\times X\rightarrow Y$ is a boundary map for $\sigma$, we can naturally define a map at the level of cochains as 
$$\textup{C}^{\bullet}(\Phi):\calB^{\infty}(Y^{\bullet+1};\matR)^G\rightarrow \Linf_{\textup{w}^*}(B^{\bullet+1};\Linf(X;\matR))^{\Gamma},$$
$$\textup{C}^{\bullet}(\Phi)(\psi)(b_0,\ldots,b_{\bullet})(x)\coloneqq \psi(\phi(b_0,x),\ldots,\phi(b_{\bullet},x))$$
for every $\psi \in\calB^{\infty}(Y^{\bullet+1};\matR)^G $ and almost every $(b_0,\ldots,b_{\bullet})\in B^{\bullet+1}$ and $x\in X$.
The above map is a well-defined cochain map and it does not increase the norm \cite[Lemma 4.2]{moraschini:savini}. As a consequence, it induces maps at the level of cohomology groups
$$\Hm^k(\Phi):\Hm^k( \calB^{\infty}(Y^{\bullet+1};\matR)^G)\rightarrow \Hb^k(\Gamma;\Linf (X;\matR))$$
for every $k\geq 0$. 

An immediate application of \cite[Proposition 1.2]{burger:articolo} shows the commutativity of the following diagram
\begin{equation}\label{eq_general_diagram}
\xymatrix{
\Hm^k(\calB^{\infty}(Y^{\bullet+1};\matR)^G)\ar[rr]^{\hspace{20pt} \mathfrak{c}^k} \ar[d]^{\Hm^k(\Phi)}&&\Hcb^k(G;\matR)\ar[dll]^{\Hb^k(\sigma)}\\
\Hb^k(\Gamma;\Linf(X;\matR)) & 
}
\end{equation}
for every $k\geq0$. 

\begin{es}\label{es_pullback_kahler_boundary}
Let $\Gamma$ be a discrete countable group and let $(X,\mu_X)$ be an ergodic standard Borel probability $\Gamma$-space. Consider a Zariski dense measurable cocycle $\sigma:\Gamma \times X \rightarrow G$, where $G$ is a semisimple Lie group of Hermitian type. By Corollary \ref{cor_boundary} there exists a boundary map $\phi:B \times X \rightarrow \calS_G$ in the Shilov boundary of $G$. Thanks to Example \ref{es_implement_kahler_class} we know that the class $\frac{1}{2\pi}[\beta_G] \in \Hm^2(\calB^\infty(S_G^3;\mathbb{R})^G)$ is sent to the bounded  K\"{a}hler class $k^b_G$. Diagram \ref{eq_general_diagram} tells us that the class $\Hb^2(\sigma)(k^b_G)$ admits as a natural representative $\frac{1}{2\pi}\textup{C}^2(\Phi)(\beta_G)$, that is 
$$
\textup{C}^2(\Phi)(b_0,b_1,b_2)(x)=\beta_G(\phi(b_0,x),\phi(b_1,x),\phi(b_2,x)) \ .
$$
\end{es}

\begin{oss}\label{remark_holom}
Given a Hermitian symmetric space $\calX$ with $G=\Isom(\calX)^{\circ}$, an isometry $h\in \Isom(\calX)$ can be either holomorphic or antiholomorphic. In the first case it preserves the complex structure $J$ on $\calX$ and the K\"{a}hler form $\omega_{\calD_{\calX}}$, whereas in the second case $\omega_{\calD_{\calX}}$ is sent to $-\omega_{\calD_{\calX}}$. 
When $\textbf{G}$ is the connected adjoint $\matR$-group associated to the complexification of the Lie algebra of $G$ and $s:\textbf{G}\rightarrow\textbf{G}$ is a $\matR$-homomorphism, the induced isometry $h\in \Isom(\calX)$ is holomorphic if and only if $s$ is \emph{positive} and antiholomorphic if it is \emph{negative} \cite[Definition 4.7]{BIW09}. 
In setting of Definition \ref{def_parametrized_kahler_class} the composition of $\sigma$ with $s$ affects the pullback of the K\"{a}hler class by a sign $\pm$, depending on the holomorphicity.
\end{oss}

\section{Proof of the Theorem}\label{section_proof}	

Before starting with the proof of Theorem \ref{main_theorem}, we recall a lemma that holds for cocycles in degree $2$.  

\begin{lemma}{\upshape \cite[Corollary 2.6]{MonShal0}} \label{lem:no:two:coboundary}
Let $\Gamma$ a finitely generated group and  let $(X,\mu_X)$ be a standard Borel probability space. If $B$ is a $\Gamma$-boundary, then 
$$
\Hb^2(\Gamma;\textup{L}^\infty(X;\mathbb{R})) \cong \mathcal{Z}\textup{L}^\infty_{\textup{w}^\ast,\textup{alt}}(B^3; \textup{L}^\infty(X;\mathbb{R}))^\Gamma \ ,
$$
where the letter $\mathcal{Z}$ denotes the set of cocycles and the subscript \emph{alt} denotes the restrictions to alternating essentially bounded weak${}^*$ measurable functions. 
\end{lemma}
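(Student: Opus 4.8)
The plan is to use the amenability of the $\Gamma$-boundary $B$ together with the functorial characterization of bounded cohomology to obtain a concrete model for $\textup{H}^2_b(\Gamma;\textup{L}^\infty(X;\mathbb{R}))$, and then to cut this model down in degree two using the specific low-dimensional structure of the complex. First I would recall, from Equation \eqref{eq_iso_cohomology} and its alternating variant, that since $B$ is an amenable regular $\Gamma$-space one has an isometric isomorphism
$$
\textup{H}^k_b(\Gamma;\textup{L}^\infty(X;\mathbb{R})) \cong \textup{H}^k\bigl(\textup{L}^\infty_{\textup{w}^\ast,\textup{alt}}(B^{\bullet+1};\textup{L}^\infty(X;\mathbb{R}))^{\Gamma},\delta^\bullet_|\bigr)
$$
for every $k\geq 0$. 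Here the coefficient module $E=\textup{L}^\infty(X;\mathbb{R})$ is the dual of $\textup{L}^1(X;\mathbb{R})$, hence a legitimate coefficient Banach $\Gamma$-module, and the $\Gamma$-action on $B$ is amenable by the construction of the $\Gamma$-boundary recalled in Section \ref{section_boundary_maps}. So the right-hand side of the claimed isomorphism is, up to identifying $\mathcal{Z}$ with the kernel of $\delta^3_|$, exactly the degree-two cohomology of this alternating complex.

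Next I would argue that in degree two every alternating cocycle already represents its class uniquely, i.e.\ that there are no nonzero alternating coboundaries in degree two. Concretely, an alternating $1$-cochain is a weak-${}^*$ measurable essentially bounded map $f:B^2\to E$ with $f(b_0,b_1)=-f(b_1,b_0)$; one computes
$$
\delta^1 f(b_0,b_1,b_2) = f(b_1,b_2)-f(b_0,b_2)+f(b_0,b_1).
$$
The key point is that a $\Gamma$-invariant alternating $1$-cochain on $B^2$ must vanish: by double ergodicity of the $\Gamma$-action on $B\times B$ (which is part of Definition \ref{def_boundary}, relative metric ergodicity implying ergodicity), the $\Gamma$-invariant function $(b_0,b_1)\mapsto \|f(b_0,b_1)\|_E$ into $\mathbb{R}$ is essentially constant, and then the alternating condition $f(b_0,b_1)=-f(b_1,b_0)$ forces that constant to be $0$ after a further ergodicity argument on the $E$-valued level — one can either invoke that $\textup{L}^\infty_{\textup{w}^\ast}(B^2;E)^{\Gamma}\cong \textup{L}^\infty_{\textup{w}^\ast}(B;E)\cong E$ type collapses coming from relative metric ergodicity of the projections $\pi_i:B\times B\to B$, or argue directly. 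Consequently $\delta^1$ vanishes on $\Gamma$-invariant alternating $1$-cochains, so the space of coboundaries in degree two is trivial, and the degree-two cohomology of the alternating complex is just the space of alternating $\Gamma$-invariant $2$-cocycles, namely $\mathcal{Z}\textup{L}^\infty_{\textup{w}^\ast,\textup{alt}}(B^3;\textup{L}^\infty(X;\mathbb{R}))^{\Gamma}$. Composing with the isometric isomorphism above yields the statement.

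The step I expect to be the main obstacle is the vanishing of $\Gamma$-invariant alternating $1$-cochains on $B^2$ with values in $E=\textup{L}^\infty(X;\mathbb{R})$: this is where one genuinely needs the double (relative metric) ergodicity of the $\Gamma$-boundary rather than mere ergodicity, because the coefficients are a nontrivial $\Gamma$-module and $X$ carries its own $\Gamma$-action entangled with that on $B$. The cleanest route is to use relative metric ergodicity of $\pi_1,\pi_2:B\times B\to B$ to conclude that any $\Gamma$-equivariant map from $B\times B$ to a coefficient module factors (essentially) through a single copy of $B$, hence is $\Gamma$-invariant as a map on $B$; combined with the sign-reversal under the flip $(b_0,b_1)\mapsto(b_1,b_0)$, which acts trivially after this factorization, one gets $f=-f$ and therefore $f=0$. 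I would then remark that the same reasoning — this is essentially the content of \cite[Proposition 1.5.2]{burger2:articolo} specialized to boundaries, or of the standard fact that strong boundaries give a resolution in which degree $\leq 1$ invariants are forced — shows more generally that the alternating resolution has no coboundaries up through degree two, and I would cite the functorial framework of \cite{burger2:articolo,monod:libro} for the identification of the cohomology of the invariant subcomplex with $\textup{H}^\bullet_b(\Gamma;E)$.
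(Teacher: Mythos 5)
Your proposal is correct and follows essentially the same route as the paper: pass to the alternating invariant resolution on the amenable $\Gamma$-boundary $B$, then kill the degree-two coboundaries by showing that $\Gamma$-invariant alternating $1$-cochains on $B^2$ with values in $\textup{L}^\infty(X;\mathbb{R})$ vanish via double ergodicity with coefficients. The paper packages your "main obstacle" step more directly, identifying $\textup{L}^\infty_{\textup{w}^\ast}(B^2;\textup{L}^\infty(X;\mathbb{R}))^\Gamma$ with $\textup{L}^\infty(B^2\times X;\mathbb{R})^\Gamma$ and citing \cite[Proposition 2.4]{MonShal0} to get essential constancy, after which the alternating condition forces the constant to be zero exactly as you argue.
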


We are now ready to give the proof of 
\vspace{0.3 cm}

\begin{rec_thm}[\ref{main_theorem}]
Let $\Gamma$ be a finitely generated discrete group and let $(X,\mu_X)$ be an ergodic standard Borel probability $\Gamma$-space. Let $\{\sigma_i:\Gamma\times X\rightarrow G_i\},i=1,\ldots,n$ be a family of Zariski dense measurable cocycles into simple Hermitian Lie groups not of tube type. If the cocycles are pairwise inequivalent, then the subset 
  $$\{\Hb^2(\sigma_i)(k^b_{G_i}), 1\leq i\leq n\}\subset \Hb^2(\Gamma;\textup{L}^\infty(X;\matR))$$
  is linearly independent over $\textup{L}^\infty(X;\matZ)$.
\end{rec_thm}

\begin{proof}
Suppose the existence of coefficients $m_i\in \Linf(X;\matZ), i=1,\ldots ,n$ such that 
$$\sum\limits_{i=1}^n m_i \Hb^2(\sigma_i)(k^b_{G_i}) =0. $$

Since each cocycle is Zariski dense and $X$ is $\Gamma$-ergodic, Corollary \ref{cor_boundary} guarantees the existence of a boundary map $\phi_i:B\times X\rightarrow \calS_{G_i}$ from a $\Gamma$-boundary $B$ into the Shilov boundary $\calS_{G_i}$ of the group $G_i$. By Example \ref{es_pullback_kahler_boundary}, the cocycle $\textup{C}^2(\Phi_i)(\beta_{G_i})\in \Linf_{\textup{w}^*}(\calS_{G_i}^3;\Linf(X;\matR))$ represents canonically the pullback of $k_{G_i}^b$ along $\sigma_i$. Additionally, since the cocycle is alternating, by Lemma \ref{lem:no:two:coboundary} there are no coboundaries in degree two. 
Hence we get the following equation
\begin{equation}\label{eq1}
\sum\limits_{i=1}^n m_i(x) \beta_{G_i}(\phi_i(b_1,x),\phi_i(b_2,x),\phi_i(b_3,x)) =0 
\end{equation}
that holds for almost every triple $(b_1,b_2,b_3)\in B^3$ and for almost every $x\in X$.
As a consequence of Equation \eqref{eq_relation} it follows that   
\begin{equation}\label{relation}
\prod\limits_{i=1}^n \langle\langle \phi_i(b_1,x),\phi_i(b_2,x),\phi_i(b_3,x)\rangle\rangle_{\matC}^{m_i(x)}=1
\end{equation}
for almost every triple $(b_1,b_2,b_3) \in B^3$ and for almost every $x\in X$.

 For any $i$, Corollary \ref{cor_boundary} allows to choose $\phi_i$ in such a way that the subset of points $(x,b_1,b_2)\in X\times B\times B$ with $(\phi_i(b_1,x),\phi_i(b_2,x))\in \calS_{G_i}^{(2)}$ is of full measure.  Hence, since a finite intersection of full measure sets is still of full measure, we can fix a point $x_0\in X$ and a pair $(b_1,b_2)\in B^2$ such that $(\phi_i(b_1,x_0),\phi_i(b_2,x_0))\in \calS_{G_i}^{(2)}$ for every $i=1,\ldots,n$.
 
Exploiting the transitivity of $G_i$ on pairs in $\calS_{G_i}^{(2)}$, we can identify $\calS_{G_i}^{(2)}$ with the quotient 
$$G_i/\Stab_{G_i}(\phi_i(b_1,x_0),\phi_i(b_2,x_0))$$ by the stabilizer in $G_i$ of the pair $(\phi_i(b_1,x_0),\phi_i(b_2,x_0))\in \calS_{G_i}^{(2)}$. Furthermore, the map $X\rightarrow \calS_{G_i}^{(2)}$ that takes $x$ to the pair $(\phi_i(b_1,x),\phi_i(b_2,x))$ is measurable by the measurability of $\phi_i$. Hence the composition 
$$X\rightarrow \calS_{G_i}^{(2)}\rightarrow G_i/\Stab_{G_i}(\phi_i(b_1,x_0),\phi_i(b_2,x_0))$$ 
is measurable as well and, composing again with the measurable section 
$$G_i/\Stab_{G_i}(\phi_i(b_1,x_0),\phi_i(b_2,x_0))\rightarrow G_i$$ given by \cite[Corollary A.8]{zimmer:libro}, we get a family of measurable functions 
$$g_i:X\rightarrow G_i \ .$$

By setting $\phi_i^{g_i}(b,x)\coloneqq g_i(x)^{-1}\phi_i(b,x)$, we have
\begin{itemize}
 \item $\phi_i^{g_i}(b_1,x)=\phi_i(b_1,x_0)$ for almost every $x\in X$;
 \item $\phi_i^{g_i}(b_2,x)=\phi_i(b_2,x_0)$ for almost every $x\in X$;
\item $\phi_i^{g_i}$ is a boundary map for the cocycle $\sigma_i^{g_i}$ by Remark \ref{oss_twisted_map}. 
\end{itemize}

Thanks to the $G_i$-invariance of the complex Hermitian triple product, Equation \eqref{relation} implies that
\begin{equation}\label{eq3}
\prod\limits_{i=1}^n \langle\langle \phi_i^{g_i}(b_1,x),\phi_i^{g_i}(b_2,x),\phi_i^{g_i}(b_3,x)\rangle\rangle_{\matC}^{m_i(x)}=1
\end{equation}
holds for almost every $b_3 \in B$ and for almost every $x\in X$.
In view of the properties of the $\phi_i^{g_i}$'s, we can rewrite Equation \eqref{eq3} so that 
\begin{equation}\label{eq2}
\prod\limits_{i=1}^n \langle\langle \phi_i(b_1,x_0),\phi_i(   b_2,x_0),\phi_i^{g_i}( b_3,x)\rangle\rangle_{\matC}^{m_i(x)}=1
\end{equation}
holds for almost every $b_3 \in B$ and for almost every $x\in X$.

We define the map
$$\sigma:\Gamma\times X\rightarrow\prod\limits_{i=1}^n G_i,\;\;\;\; (\gamma,x)\mapsto (\sigma_i^{g_i}(\gamma,x))_i$$
which is a cocycle since the cocycle condition follows from the ones of the $\sigma_i^{g_i}$'s
with boundary map
$$\phi:B\times X\rightarrow\prod\limits_{i=1}^n \calS_{G_i},\;\;\;\; (b,x)\mapsto (\phi_i^{g_i}(b,x))_i.$$
and we denote by $\textbf{L}$ the algebraic hull of $\sigma$.

Using the same notation of Lemma \ref{lemma_constant}, we denote by
$\mathcal{O}_i\coloneqq \mathcal{O}_{\phi_i(b_1,x_0),\phi_i(b_2,x_0)} \subset \calS_{G_i}$ the domain of definition of the map 
$$P_{\phi_i(b_1,x_0),\phi_i(b_2,x_0)}:\calO_{\phi_i(b_1,x_0),\phi_i(b_2,x_0)}\rightarrow \Delta^*\backslash A^*\;, \;\;\; \eta\mapsto \langle\langle \phi_i(b_1,x_0),\phi_i(b_2,x_0),\eta\rangle\rangle_{\matC}\,.$$
For almost every $x \in X$ we have that
$$\left( \overline{\textup{EssIm}(\phi_x)}^Z\cap \prod\limits_{i=1}^n\mathcal{O}_i \right) \subset \left\{(\eta_1,\ldots,\eta_n)\in \prod\limits_{i=1}^n\mathcal{O}_i \;,\;  \prod\limits_{i=1}^n P^{m_i(x)}_i(\eta_i)=0\right\}$$
where $\phi_x$ is the $x$-slice of the boundary map. Since each $G_i$ is not of tube type, by Lemma \ref{lemma_constant} each $P_i^{m_i(x)}$ cannot be constant. It follows that $\overline{\textup{EssIm}(\phi_x)}^Z$ is contained in a proper Zariski closed subset of $\prod\limits_{i=1}^n\mathcal{O}_i$. This implies that the cocycle $\sigma$ cannot be Zariski dense, otherwise its slices would be Zariski dense by Corollary \ref{cor_boundary}. As a consequence, we conclude that $\textbf{L}$ must be a proper subgroup of $\prod\limits_{i=1}^n\mathbf{G}_i$.  


Now, since every $\sigma_i$ is Zariski dense, also every $\sigma_i^{g_i}$ is and the projection $\pi_i$ of $\textbf{L}$ on $\mathbf{G}_i$ is onto for every $i$.
In this setting we claim that there exist $i,j\in \{1,\ldots,n\}$ with $i\neq j$ such that $\textbf{G}_i\cong \textbf{G}_j$.
We will sketch a proof for $n=2$ based on Goursat Lemma \cite[Theorem 4]{AC09} the get the statement. The more general case can be deduced from this basic step using the extended version of Goursat Lemma \cite[3.2 Theorem]{Bauer2015}.

Consider a proper subgroup $\textbf{L}<\textbf{G}_1\times \textbf{G}_2$ such that the restriction of the projections $\pi_i$ on the two factors are surjective. 
Let $e_{\mathbf{G}_i} \in \mathbf{G}_i$ be the neutral element of $\mathbf{G}_i$, with $i=1,2$. Define the inclusion $i_1:\mathbf{G}_1 \rightarrow \mathbf{G}_1 \times \mathbf{G}_2$ as $i_1(g_1)=(g_1,e_{\mathbf{G}_2})$ and set $L_1:=i_1^{-1}(\mathbf{L})$. Define similarly the inclusion $i_2:\mathbf{G}_2 \rightarrow \mathbf{G}_1 \times \mathbf{G}_2$ and the group $L_2$. Notice that $L_i$ is a normal subgroup of $\mathbf{G}_i$, for $i=1,2$. 

By the fact that $\mathbf{L}$ surjects on each factor, Goursat Lemma guarantees the existence an isomorphism $s:\mathbf{G}_1/L_1 \rightarrow \mathbf{G}_2/L_2$ defined by $s(g_1L_1)=g_2L_2$ where $(g_1,g_2) \in \mathbf{L}$. We claim that both $L_1$ and $L_2$ are trivial, leading to the desired isomorphism between $\mathbf{G}_1$ and $\mathbf{G}_2$. 

By contradiction, suppose that $L_1$ is not trivial. Since $\mathbf{G}_1$ is simple, we must have $\mathbf{G}_1=L_1$. By the fact that $\varphi$ is an isomorphism, we also have that $\mathbf{G}_2=L_2$. By the way they are defined $L_1$ and $L_2$, we obtain that $\mathbf{L}=\mathbf{G}_1 \times \mathbf{G}_2$, leading to the desired contradiction. Thus both $L_1$ and $L_2$ are trivial and $s:\mathbf{G}_1 \rightarrow \mathbf{G}_2$ is an isomorphism.

In general, there exists at least one $\matR$-isomorphism 
$s:\textbf{G}_i\rightarrow \textbf{G}_j$ for some $i\neq j$ such that $s\circ\sigma_i\simeq \sigma_j$. This contradicts the pairwise inequivalence of the $\sigma_i$'s and concludes the proof.

\end{proof}
 
Theorem \ref{main_theorem} implies the following 
\vspace{0.3 cm}

\begin{rec_thm}[\ref{main_theorem2}]
Let $\Gamma$ be a finitely generated group, let $(X,\mu_X)$ be an ergodic standard Borel probability $\Gamma$-space and consider a Zariski dense measurable cocycle $\sigma:\Gamma\times X\rightarrow G$  into a simple Hermitian Lie group not of tube type.
Then the class $\Hb^2(\sigma)(k_G^b)$ in $\Hb^2(\Gamma;\textup{L}^\infty(X;\matR))$ is non-zero and it determines uniquely the cohomology class of $\sigma$.
\end{rec_thm}

\begin{proof}
The non-vanishing of $\Hb^2(\sigma)(k_G^b)$ is a direct consequence of Theorem \ref{main_theorem2}.
It remains to prove that two cocycles $\sigma_1,\sigma_2:\Gamma\times X\rightarrow G=\Isom(\calX)^{\circ}$ have the same parametrized K\"{a}hler class if and only if they are cohomologous. One direction follows immediately by Proposition \ref{prop_pull_back_invariance}. 

We now prove the other implication. Assuming that
$\Hb^2(\sigma_1)(k_G^b)= \Hb^2(\sigma_2)(k_G^b)$, Theorem \ref{main_theorem2} provides an $\matR$-automorphism $s:\textbf{G}\rightarrow \textbf{G}$ such that 
$s\circ \sigma_1\simeq \sigma_2$, that is 
$s\circ \sigma_1=\sigma_2^f$
for some measurable function $f:X\rightarrow G$. We denote by $h:\calX\rightarrow \calX$ the isometry induced by $s$ of the Hermitian symmetric space $\calX$ such that $G=\Isom(\calX)^{\circ}$.
It is sufficient to prove that $h$ is holomorphic, since $G=\textbf{G}(\matR)^{\circ}= \textup{Hol}(\calX)\cap \textbf{G}(\matR)$, where $\textup{Hol}(\calX)$ is the set of holomorphic automorphisms of $\calX$ (see \cite{satake:1980} or \cite[Proposition 1.7]{milne}). 
Computing the pull back of the bounded K\"{a}hler class of $G$ and exploiting the $G$-invariance, we obtain that
\begin{align*}
\Hb^2(\sigma_2)(k_G^b)&=\Hb^2(\sigma_2^f)(k_G^b)\\
&=\Hb^2(s\circ \sigma_1)(k_G^b)\\
&=\epsilon(h) \Hb^2(\sigma_1)(k_G^b)\\
&=\epsilon(h) \Hb^2(\sigma_2)(k_G^b)
\end{align*}
where $\epsilon(h)$ is the sign of the isometry $h$. We moved from the second to the third line applying Remark \ref{remark_holom} and according to the fact that $h$ is either holomorphic or antiholomorphic. Since $\Hb^2(\sigma_2)(k_G^b)\neq 0$, then $\epsilon(h)=1$ and $h\in \Isom (\calX)^{\circ}=G$ and hence 
$$h \sigma_1 h^{-1}=\sigma_2^f$$ for some element $h\in G$. The thesis follows by setting 
$$\tilde{f}:X\rightarrow G\,,\;\;\; \tilde{f}(x)\coloneqq f(x) h$$
and by the fact that 
$$\sigma_1=\sigma_2^{\tilde{f}}\simeq \sigma_2.$$
\end{proof} 
 
Following \cite{BIW07}, in the setting of Theorem \ref{main_theorem2} we can denote by $\textup{Rep}_{ZD}(\Gamma;G)$ the subset of Zariski dense representations of $\Gamma$ in $G$ modulo conjugation.
By \cite[Theorem 3]{BIW07} the map 
$$K:\textup{Rep}_{ZD}(\Gamma;G)\rightarrow \Hb^2(\Gamma;\matR)\;,\;\;\; [\rho]\mapsto \Hb^2(\rho)(k_G^b)$$
is injective. 
Moreover, the inclusion 
  \begin{align*}  \left\{\parbox{13 em}{Zariski dense representations    \centering$\Gamma\rightarrow G$}\right\}&\hookrightarrow \left\{\parbox{13 em}{Zariski dense cocycles \centering$\Gamma\times X \rightarrow G$ }\right\}\;,\\
  \rho&\mapsto \sigma_{\rho}.
\end{align*}
induces a map 
$$\textup{Rep}_{ZD}(\Gamma;G)\rightarrow \Hm^1_{ZD}(\Gamma \curvearrowright X; G).$$
Finally we denote by
$$K_X: \Hm^1_{ZD}(\Gamma \curvearrowright X;G )\rightarrow\Hb^2(\Gamma;\Linf(X;\matR))\, ,\;\;\; [\sigma]\mapsto  \Hb^2(\sigma)(k_G^b)$$ the map that associates to every cohomology class of a cocycle $\sigma:\Gamma\times X\rightarrow G$ its parametrized K\"{a}hler class. By Theorem \ref{main_theorem} we know that the map $K_X$ is injective. 

Putting together the above maps and the map induced in cohomology by the inclusion of coefficients $\matR\rightarrow \Linf(X;\matR)$, we get the following 

\begin{cor}\label{corollary_diagram}
In the setting of Theorem \ref{main_theorem2}, we have a commutative diagram
\begin{equation}\label{eq:diagram:K}
\xymatrix{
\textup{Rep}_{ZD}(\Gamma;G) \ar[rr]^K \ar[d]  && \Hb^2(\Gamma;\matR) \ar[d] \\
\Hm^1_{ZD}(\Gamma \curvearrowright X;G)\ar[rr]^{K_X} &&\Hb^2(\Gamma;\textup{L}^{\infty}(X;\matR)) \ ,
}
\end{equation}
where the horizontal rows are injective. 
\end{cor}

\begin{oss}
We discuss a little about the vertical maps which appear in Diagram \eqref{eq:diagram:K}. The left vertical one is not necessarily injective, namely two representations inducing cohomologous cocycles are not necessarily conjugated. Similarly, surjectivity does not hold in general, that is not every cohomology class in $\textup{H}^{1}_{ZD}(\Gamma \curvearrowright X;G)$ contains a representation as preferred representative. Surjectivity is exactly the core of \emph{cocycles superrigidity theory} \cite{zimmer:annals,sarti:savini1}.

We now focus our attention on the right vertical arrow, namely the one induced by the inclusion of coefficients $\matR\hookrightarrow \Linf(X;\matR)$. As we are going to see in the next section, under specific assumptions on $\Gamma$, its bounded cohomology with $\Linf(X;\matR)$-coefficients can be equivalently computed by restricting the coefficients to the submodules of $\Gamma$-invariants \cite{Mon10}. A case when this reduction can be done is given by higher rank irreducible lattices. Together with the hypothesis of $\Gamma$-ergodicity on $(X,\mu)$, the reduction of coefficients allows to conclude that the right vertical arrow in Diagram \ref{eq:diagram:K} is actually an isomorphism. 

It is worth noticing that Diagram \eqref{eq:diagram:K} allows to partially translate the study of the left vertical map, which is merely a set-theoretical object, in terms of a homomorphism between bounded cohomology groups.

Finally, we recall the existence of a left (but not right) inverse map for the function $\Hb^2(\Gamma;\matR)\rightarrow \Hb^2(\Gamma;\Linf(X;\matR))$, induced by the integration along $X$. Such a map, called \emph{integration map}, is exploited for instance in the definition of \emph{numerical invariants} for cocycles \cite{sarti:savini1,sarti:savini}. We refer to \cite{moraschini:savini:2} for a more detailed discussion about this topic.
\end{oss}

\section{Consequences of the main theorem}\label{section_consequences}

The aim of this last section is to present some consequences of Theorem \ref{main_theorem2} when $\Gamma$ belongs to specific families of finitely generated groups. 
The second author has recently studied the elementarity properties of cocycles with values into the homeomorphisms of the circle when $\Gamma$ is either a higher rank lattice \cite[Theorem 4]{savini2021} or an irreducible subgroup of a product \cite[Theorem 3]{savini2021}. Here we want to follow the same line,
and we will prove the vanishing of $\Hm^1_{ZD}(\Gamma \curvearrowright X;G)$ under suitable assumptions. The interest in the vanishing of such space is dynamical and comes from theories as measure equivalences or orbit equivalences.

We start with the case of higher rank lattices.
\begin{prop}\label{cor_high_rank}
Let $\Gamma<H=\textup{\textbf{H}}(\matR)^\circ$ be a lattice, where $\textup{\textbf{H}}$ is a connected, simply connected, almost simple $\matR$-group of rank at least two. Let $(X,\mu_X)$ be an ergodic standard Borel probability $\Gamma$-space and consider a simple Hermitian Lie group $G$ not of tube type.
Then $\Hb^2(\Gamma;\matR)\cong 0$ implies that 
$$\left|\Hm^1_{ZD}(\Gamma \curvearrowright X;G)\right|=0\,.$$
\end{prop}

Before giving the proof, which is an application of Theorem \ref{main_theorem2} and of the main results in \cite{Mon10}, we show some examples in which Proposition \ref{cor_high_rank} applies.
\begin{oss}
The vanishing condition assumed in Proposition \ref{cor_high_rank} gives rise to the natural problem of finding family of examples of higher rank lattices $\Gamma$ for which $\Hb^2(\Gamma;\matR)=0$. 

Thanks to Remark \ref{oss bound zariski dense} the injectivity of the comparison map tells us that it is sufficient to verify that $\textup{H}^2(\Gamma;\mathbb{R})$ vanishes. 
Another example is given by $\Gamma<H= \Isom(\calY)$ a torsion free cocompact lattice of a Lie group of rank bigger or equal than $3$ and $\calY$ is not Hermitian symmetric. In fact under this hypothesis \cite[Corollary 1.6]{BM1} implies that $\Hb^2(\Gamma;\matR)=0$. 

\end{oss}

\begin{proof}[Proof of Proposition 4.1]
Theorem \ref{main_theorem2} provides an injective map 
$$K_X:\textup{H}^1_{ZD}(\Gamma \curvearrowright X;G) \rightarrow \Hb^2(\Gamma;\Linf(X;\mathbb{R}))\,,
$$
and shows that $\Hb^2(\sigma)(k_G^b)\neq 0\in \Hb^2(\Gamma;\Linf(X;\matR))$ for any $\sigma\in \Hm_{ZD}^1(\Gamma\curvearrowright X;G)$.
Moreover, since the Banach $G$-module $\Linf(X;\matR)$ is semi-separable, \cite[Corollary 1.6]{Mon10} implies that $$\Hb^2(\Gamma;\Linf(X;\mathbb{R}))\cong \Hb^2(\Gamma;\Linf(X;\mathbb{R})^{\Gamma})\,,$$
where $\Linf(X;\mathbb{R})^{\Gamma}$ denotes the $\Gamma$-invariant vectors of $\Linf(X;\mathbb{R})$. 
Finally, the hypothesis of ergodicity and the vanishing condition on $\Hb^2(\Gamma;\matR)$ show that $$ \Hb^2(\Gamma;\Linf(X;\mathbb{R})^{\Gamma})\cong  \Hb^2(\Gamma;\matR)=0 \ .$$
We conclude by exploiting the fact that $\Hb^2(\sigma)(k_G^b)\neq 0$ whenever $\sigma$ is Zariski dense.
\end{proof}

\begin{oss}\label{oss bound zariski dense}
The setting of Proposition \ref{cor_high_rank} coincides with the one of Zimmer's superrigidity theorem \cite{zimmer:annals}, which can be applied to show that
any Zariski dense cocycle $\sigma : \Gamma\times X\rightarrow G$ is induced by a representation $\rho:\Gamma\rightarrow G$. This means that the right vertical row of Diagram \eqref{eq:diagram:K} is a bijection, namely 
$$\Hm_{ZD}^1(\Gamma \curvearrowright X;G)= \textup{Rep}_{ZD}(\Gamma,G)\,.$$ 
If we consider the map induced in cohomology by the inclusion $\textup{C}_{\textup{cb}}^2(\Gamma;\matR)\hookrightarrow \textup{C}_{c}^2(\Gamma;\matR)$, this is called \emph{comparison map} $\Hb^2(\Gamma;\matR)\rightarrow \Hm^2(\Gamma;\matR)$. By \cite[Theorem 21]{burger2:articolo} the higher rank assumption implies that the comparison map is injective. As a consequence \cite[Corollary 6]{BIW07} provides a bound for the number of Zariski dense cohomology classes of cocycles, precisely
\begin{equation}\label{bound}
\left|\Hm^1_{ZD}(\Gamma \curvearrowright X;G)\right|\leq \dim \Hm^2(\Gamma;\matR)\,.
\end{equation}

In other words, without assuming that $\Hb^2(\Gamma;\matR)$ vanishes, we only have the bound of Equation \eqref{bound}.
\end{oss}


We move now to the case of products, namely when $$\Gamma<H=\prod\limits_{i=1}^n H_i $$ with $n\geq 2$, where each factor $H_i$ is a locally compact and second countable group with $\Hm_{\textup{cb}}^2(H_i;\matR)=0$. We set $$H'_i=\prod\limits_{j\neq i}H_i$$ for $i=1,\ldots,n$
and we assume that each $H'_i$ acts ergodically on $X$ (that is $H$ acts on $X$ \emph{irreducibly} in the sense of Burger-Monod). 
Following \cite{burger2:articolo}, we say that $\Gamma$ is \emph{irreducible} if each projection of $\Gamma$ in $H_i$ is dense in $H_i$. 
In this setting we are able to prove the following result.
\begin{prop}\label{proposition_product}
Let $n\geq 2$. Consider an irreducible lattice $\Gamma<\prod\limits_{i=1}^n H_i$ in a product of locally compact second countable groups with $\Hcb^2(H_i;\matR)=0$ for $i=1,\ldots, n$. and a simple Hermitian Lie group $G$ not of tube type. 
Let $(X,\mu_X)$ be an irreducible standard Borel $H$-space and assume that the $\Gamma$-action is ergodic. Then it holds that 
$$\left|\Hm^1_{ZD}(\Gamma \curvearrowright X;G)\right|=0\,.$$
\end{prop}
\begin{proof}
As in Proposition \ref{cor_high_rank} the inclusion 
$$
\textup{L}^\infty(X;\mathbb{R}) \rightarrow \textup{L}^2(X;\mathbb{R}) 
$$
induces an injective map  
$$
\Hb^2(\Gamma;\textup{L}^\infty(X;\mathbb{R} )) \rightarrow \Hb^2(\Gamma;\textup{L}^2(X;\matR))
$$
by \cite[Corollary 9]{burger2:articolo}. Precomposing with the injection $K_X$, Theorem \ref{main_theorem2} gives back us an inclusion
$$\Hm^1_{ZD}(\Gamma \curvearrowright X;G)\hookrightarrow \Hb^2(\Gamma;\textup{L}^2(X;\mathbb{R})).$$
By \cite[Theorem 16]{burger2:articolo}, we have a decomposition
\begin{equation}\label{eq_sum}
\Hb^2(\Gamma;\textup{L}^2(X;\mathbb{R}))\cong\bigoplus\limits_{i=1}^n \Hcb^2(H_i;\textup{L}^2(X;\mathbb{R})^{H_i'})\cong\bigoplus\limits_{i=1}^n \Hcb^2(H_i;\matR)
\end{equation}
where the latter isomorphism holds thanks the irreducibility of $H$ on $X$. Since each term in the direct sum vanishes and $\Hb^2(\sigma)(k_G^b)$ is non-zero the statement follows. 
\end{proof}
 
As noticed above, Proposition \ref{proposition_product} can be view as the analogous of  \cite[Theorem 3]{savini2021}. There the second author considers the set of \emph{semicohomology} classes of cocycles into $ \textup{Homeo}^+(\mathbb{S}^1)$, where semicohomology is a weaker notion with respect to cohomology.
A deep study of the \emph{parametrized Euler class} of such cocycles, namely the pullback of the \emph{real Bounded Euler class}, allows to show that, under the same assumptions of either Proposition \ref{cor_high_rank} or Proposition \ref{proposition_product}, the cocycles is semicohomologous to a rotational cocycle.

\section*{Acknowledgments} 
We are grateful to the referees for their suggestions and comments, which allowed us to improve the quality of this paper.  

\bibliographystyle{amsalpha}

\bibliography{biblionote}

\end{document}